\theoremstyle{remark}{
\newtheorem{Def}{{\rm Definition}}[section]
\newtheorem{Ex}{{\rm Example}}[section]
\newtheorem{Rem}{{\rm Remark}}[section]

\newtheorem{MainProb}{Main Problem}
}
\theoremstyle{plain}{

\newtheorem{Prop}{Proposition}[section]
\newtheorem{Thm}{Theorem}[section]
\newtheorem{MainThm}{Main Theorem}

}
\begin{document}

\title[On $2$-dimensional Reeb spaces of simple fold maps]{
Branched surfaces homeomorphic to Reeb spaces of simple fold maps}
\author{Naoki Kitazawa}
\keywords{Branched surfaces. Reeb spaces. Polyhedra. Curves and surfaces. Immersions and embeddings. Morse functions and fold maps. \\
\indent {\it \textup{2020} Mathematics Subject Classification}: Primary~57Q05. Secondary~57Q35, 57R45.}
\address{Institute of Mathematics for Industry, Kyushu University, 744 Motooka, Nishi-ku Fukuoka 819-0395, Japan \\
 TEL (Office): +81-92-802-4402 \\
 FAX (Office): +81-92-802-4405 \\
}
\email{n-kitazawa@imi.kyushu-u.ac.jp}
\urladdr{https://naokikitazawa.github.io/NaokiKitazawa.html}
\maketitle
\begin{abstract}
Classes of {\it branched surfaces} extend the classes of {\it surfaces} or 2-dimensional manifolds satisfying suitable properties and defined in various manners.
{\it Reeb spaces} of smooth maps of suitable classes into surfaces whose codimensions are negative 
are regarded as branched surfaces. They are the spaces of all connected components of preimages and natural quotient spaces of the manifolds of the domains. They are defined for general smooth maps and important topological objects in differential topology. They also play important roles in applied or applications of mathematics such as projections in data analysis and visualizations. 

The present paper concerns global topologies of branched surfaces and explicit construction of canonically obtained maps from the branched surfaces into surfaces of the targets via fundamental operations. The class of these induced maps extends the class of smooth immersions of compact surfaces into surfaces with no boundaries. It is also regarded as a variant of the class of so-called generic smooth maps between these surfaces. We study so-called "geography" of such maps as a natural, important and new study and also study global topological properties of the branched surfaces such as embeddability into $3$-dimensional closed and connected manifolds.




\end{abstract}


\maketitle
\section{Introduction.}
\label{sec:1}

{\it Branched surfaces} are defined in various manners suitable for given situations, extending the classes of {\it surfaces} of suitable classes. They are also higher dimensional variants of graphs for example. Branched surfaces are central objects in the present paper and we study global topological properties of them for example.

{\it Reeb spaces} of smooth maps of suitable classes into surfaces whose codimensions are negative 
are regarded as branched surfaces where the suitable definitions are given.

For a continuous map $c:X \rightarrow Y$, the equivalence relation ${\sim}_c$ on $X$ is defined in the following way: $x_1 {\sim}_c x_2$ holds for $x_1,x_2 \in X$ if and only if they are in a same connected component of a preimage. 
\begin{Def}
	Here the quotient space $W_c:=X/{\sim}_c$ of $X$ is the {\it Reeb space} of $c$.
\end{Def}

In addition, $q_f:X \rightarrow W_c$ and $\bar{c}$ denote the quotient map and the map uniquely defined for a natural relation $c=\bar{c} \circ q_c$.

In this paper, we mainly discuss the PL category, or as an equivalent category, the piesewise smooth category.
A {\it polyhedron} means an object in these categories. 
For an object $X$ there, $\dim X$ debotes the dimension of $X$. A morphism in the PL category is called a {\it PL} map generally and one in the piesewise smooth category is called a {\it piesewise smooth} map. Two polyhedra are {\it PL homeomorphic} if there exists a homeomorphism which is a PL map or a piecewise smooth map. We call such a map a {\it PL homeomorphism} and {\it piecewise smooth homeomorphism}, respectively. We can define a {\it PL embedding} and a {\it piesewise smooth embedding} for example. We can define a {\it subpolyhedron} and as a specific case a {\it PL submanifold} for example. For a regular neighborhood
of a subpolyhedron, we can define a {\it regular neighborhood} as a neighborhood which is also a subpolyhedron. It is unique under a natural equivalence relation. For these categories, see also \cite{hudson} and see also \cite{bryant} for example. We do not expect much knowledge on these categories and this is no problem.

The Reeb spaces of smooth functions and smooth maps whose codimensions are negative are fundamental and important topological objects in (differential) topology of manifolds, representing manifolds compactly (\cite{reeb}). 
They are polyhedra whose dimensions are same as those of the targets in considerable cases. Reeb spaces(, especially ones whose dimensions are $1$ or $2$,) are also fundamental and strong tools in applied (or applications of) mathematics, such as projections in data analysis and visualizations for example.

The present paper also concerns Reeb spaces of smooth maps of suitable classes on closed manifolds whose dimensions are greater than $2$ (into surfaces).

Throughout the present paper, ${\mathbb{R}}^k$ denotes the $k$-dimensional Euclidean space for each integer $k \geq 0$: $\mathbb{R}$ denotes the field of all real numbers and ${\mathbb{R}}^1$ and $\mathbb{R}$ are same.
$\mathbb{Z} \subset \mathbb{R}$ denotes the ring of all integers which is also a subring of $\mathbb{R}$. For each point or vector $x \in {\mathbb{R}}^k$, $||x|| \geq 0$ denotes the distance between $x$ and the origin $0$ and the value of the canonically defined norm at $x$ where ${\mathbb{R}}^k$ is a smooth manifold with the natural differentiable structure and the Riemannian manifold endowed with the standard Euclidean metric. $S^k:=\{x \in {\mathbb{R}}^{k+1} \mid ||x||=1.\}$ denotes the $k$-dimensional unit sphere and it is a smooth closed submanifold of ${\mathbb{R}}^{k+1}$ with no boundary for $k \geq 0$.
$D^k:=\{x \in {\mathbb{R}}^{k} \mid ||x|| \leq 1.\}$ denotes the $k$-dimensional unit disk and it is a compact smooth submanifold of ${\mathbb{R}}^{k}$ for $k \geq 1$.

The following class of $2$-dimensional compact polyhedra is also presented in \cite{kitazawa11} for example. 
\cite{ikeda} presents a related classical theory. This class has been important in various scenes of geometry such as ones in topological theory of $3$-dimensional manifolds: related studies are presented shortly later. 

We define a polyhedron in this class as a {\it branched surface} in the present study.
A {\it PL} bundle means a bundle whose fiber is a polyhedron and whose structure group consists of PL homeomorohisms or piecewise smooth homeomorphisms.

\begin{Def}
\label{def:2}
A {\it branched surface} $P$ is a $2$-dimensional compact polyhedron satisfying the following three.
\begin{enumerate}
\item There exists a family $\{C_j\}$ of finitely many copies of circles which are subpolyhedra of $P$ and mutually disjoint.
\item $P-{\sqcup}_{j} C_j$ is a $2$-dimensional manifold with no boundary.
\item For each $C_j$ in the family $\{C_j\}$, some small regular neighborhood $N(C_j)$ has either of the following properties.
\begin{enumerate}
\item $N(C_j)$ is the total space of a trivial PL bundle over $C_j$ whose fiber is PL homeomorphic to $[0,1]$ where $C_j \subset N(C_j) \subset P$ is identified with $C_j \times \{0\} \subset C_j \times [0,1]$.
\item Let $K:=\{(r\cos t, r\sin t) \mid 0 \leq r \leq 1,t=0,\frac{2}{3}\pi,\frac{4}{3}\pi.\} \subset {\mathbb{R}}^2$. $N(C_j)$ is the total space of a PL bundle over $C_j$ whose fiber is PL homeomorphic to $K$ and whose structure group is a trivial group or a group of order $2$ acting in the following way: the action by the non-trivial group fixes $\{(r,0) \mid 0 \leq r \leq 1.\}$ and the non-trivial element maps $(r\cos \frac{2}{3}\pi, r\sin \frac{2}{3}\pi)$ to $(r\cos \frac{4}{3}\pi, r\sin \frac{4}{3}\pi)$. $C_j \subset N(C_j) \subset P$ is identified with $C_j \times \{0\}$.
\end{enumerate}
\end{enumerate}
\end{Def}

A {\it PL manifold} means a polyhedron which is a (topological) manifold and satisfying certain conditions on {\it simplicial structures}, compatible with the polyhedron. It is also well-known that a smooth manifold is canonically regarded as a PL manifold. A topological space homeomorphic to a $1$ or $2$-dimensional polyhedron is always homeomorphic to a unique polyhedron and a topological manifold whose dimension is $1$, $2$ or $3$ is well-known to be homeomorphic to a unique smooth manifold and a unique PL manifold. This is so-called hauptvermutung, discussed in \cite{moise} for example.

 Throughout the present paper, {\it surfaces} mean $2$-dimensional PL manifolds possibly with non-empty boundaries. A compact surface is regarded as a branched surface.

In the present paper, we also concentrate on the class of so-called {\it simple} {\it fold} maps into the plane or surfaces with no boundaries on closed manifolds whose dimensions are greater than $2$ and their Reeb spaces. They are branched surfaces in Definition \ref{def:2}. Here we shortly explain about {\it simple} fold maps, leaving rigorous expositions to the third section. Such maps form a certain class of smooth maps, generalizing the class of so-called {\it Morse} functions. We do not explain about Morse functions precisely and see \cite{milnor,milnor2} for example. 

For a differentiable map $c:X \rightarrow Y$, a {\it singular} point $p \in X$ is defined as a point where the rank of the differential ${dc}_p$ is smaller than $\min\{\dim X,\dim Y\}$ and we call the set of all singular points of $c$ the {\it singular set} of $c$, denoted by $S(c)$. $c(S(c))$ is the {\it singular value set} of $c$.

The singular sets of {\it Morse} functions are discrete subsets.

For such maps into surfaces with no boundaries, the Reeb spaces satisfy Definition \ref{def:2}.

The present paper also concerns the global topology of the Reeb space of a given simple fold map $f$ on a closed manifold whose dimension is greater than $2$ into a surface and global properties of the map $\bar{f}$. The following problems are fundamental questions in our study.

\begin{MainProb}
\label{mprob:1}
Is a given branched surface regarded as a polyhedron homeomorphic to the Reeb space of a simple fold map on a closed manifold whose dimension is greater than $2$ into some surface with no boundary?
\end{MainProb}
This is or can be, in some senses, affirmatively solved under suitable situations. 

First, this is divided into problems on existence of smooth immersions of circles and compact surfaces into surfaces. See the last part of the third section for related articles on the existence theory of such maps, for example.

Moreover, for example, \cite{ishikawakoda} gives construction of simple fold maps from branched surfaces with several associated data giving maps onto the branched surfaces on $3$-dimensional closed manifolds locally regarded as $q_f$ of a simple fold map $f$: this is an application of the theory of so-called {\it shadows}.

In other scenes, for (finite) graphs, affirmative answers are presented in various situations in various articles.
\cite{sharko} is a pioneering work related to the problems, showing that a finite graph satisfying suitable conditions is homeomorphic to the Reeb space of a smooth function on some closed surface via explicit construction of functions. For example, 
\cite{masumotosaeki, michalak, saeki5} are related studies. The author has obtained related results in \cite{kitazawa0.5, kitazawa0.6, kitazawa5} for example.
\begin{MainProb}
\label{mprob:2}
For the class of simple fold maps into surfaces with no boundaries on closed manifolds whose dimensions are greater than $2$, study explicit construction of a simple fold map $f$ and the induced map $\bar{f}$ and so-called ''geography'' of such maps. 
\end{MainProb}

If the Reeb space of a map $f$ of such a class is a compact surface, then $\bar{f}$ is a smooth immersion. As in Proposition \ref{prop:4}, for a smooth immersion of a compact surface into surfaces, we have a simple fold map $f$ such that $\bar{f}$ is the original immersion.
This problem can be a study on classes of maps on branched surfaces into surfaces wider than the class of smooth immersions of compact surfaces into surfaces. 

Our study is also and mainly motivated by a large amount of studies on Morse functions and higher dimensional variants and applications to (differential) topology of manifolds including the studies just before. 
Especially, studies on construction of explicit smooth maps, which are important in the theory and have been difficult, and related works on global algebraic topological or differential topological properties of Reeb spaces by the author, have led us to the present study.    
\cite{kitazawa2, kitazawa3, kitazawa4, kitazawa6, kitazawa8, kitazawa9, kitazawa11} are some of related studies by the author. Studies concentrating more on manifolds admitting these maps are \cite{kitazawa0.1, kitazawa0.2, kitazawa0.4, kitazawa1, kitazawa7, kitazawa10} for example. 
More studies related to the theory will be presented later when we need to present.

\begin{MainProb}
\label{mprob:3}
What can we say about global topological properties of our branched surfaces? For example, can we embed them into given $3$-dimensional closed and connected manifolds?
\end{MainProb}

This is a well-known variant of embeddability of graphs into surfaces. See \cite{ozawa} and see also \cite{matsuzakiozawa,munozozawa}.

The present paper is organized as follows.

In the next section, we introduce Main Theorems with some short expositions.

The third and fourth sections are precise expositions on Main Theorems starting from rigorous expositions on fundamental terminologies and notions.

\section{Main Theorems.}

We explain shortly about some notions to introduce Main Theorems.

A {\it normal} branched surface is a branched surface where the bundles over $C_i$ whose fibers are PL homeomorphic to $K$ in Definition \ref{def:2} are always trivial, defined again in Definition \ref{def:6}, in the next section. An {\it SSNS} fold map is a simple fold map on an $m$-dimensional closed manifold into a surface with no boundary satisfying $m>2$ such that preimages containing no singular points are disjoint unions of copies of a unit sphere, that the Reeb space is normal, and that additional suitable properties are enjoyed. This is also defined rigorously in Definition \ref{def:6}. We also need a map ({\it locally}) {\it born from an SSNS fold map}, defined in Definition \ref{def:7}, in the next section. This is a map locally regarded as the quotient map onto the Reeb space induced from an SSNS fold map.



Main Theorems \ref{mthm:1}, \ref{mthm:2.1} and \ref{mthm:2.2} are on new global topological properties of the Reeb space $W_f$ and construction of the map $\bar{f}$ in Main Problems \ref{mprob:1} and \ref{mprob:2}.

For the following theorem, fundamental notions on elementary algebraic topology are presented in the fourth section before presenting this as Theorem \ref{thm:1} again. We also expect that readers have some knowledge on them. \cite{hatcher} explains about them systematically. 
$B(c)$ denotes the disjoint union ${\sqcup}_j C_j$ for the branched surface $W_c$ in Definition \ref{def:2}. We define this again in the fourth section.
\begin{MainThm}[Theorem \ref{thm:1}]

	\label{mthm:1}
Let $P$ be a branched surface, $N$ a surface with no boundary, and $c:P \rightarrow N$ a {\it map }{\rm (}{\it locally}{\rm )} {\it born from an SSNS fold map}.
Let $\{T_j\}_j$ be a family of finitely many circles which are disjointly embedded in $P-B(c)$ smoothly and regarded as subpolyhedra of $P$. Let the size of the family $\{T_j\}_j$ be $l>0$.
Let ${\bigcup}_{j} c(T_j)$ be the image of the boundary of a compact and connected surface $S_C$ smoothly immersed into $N$.
Then by attaching a surface homeomorphic to $S_C$ along ${\sqcup}_j T_j$ on the boundary by a PL homeomorphism, a piesewise homeomorphism, or a diffeomorphism, we have a new 
{\rm (}normal{\rm )} branched surface $P^{\prime}$ and a map $c^{\prime}:P^{\prime} \rightarrow N$ {\rm (}resp. locally{\rm )} born from an SSNS fold map.  

Furthermore, we have the following two where $A$ is a principal ideal domain having a unique identity element different from the zero element. 
\begin{enumerate}
	\item 
	\label{mthm:1.1}
	Let the value of the homomorphism induced by the inclusion of the {\rm (}suitably oriented{\rm )} circle $T_j$ at the fundamental class is the zero element of $H_1(P;A)$. Assume that $P$ is connected. Assume also that $S_C$ is orientable. Then we have the following information on the homology groups and the cohomology groups.
	\begin{enumerate}
		\item
		\label{mthm:1.1.1}
		$H_1(P^{\prime};A)$ is isomorphic to and identified with the direct sum of $H_1(P;A)$, $A^{l-1}$, and $H_1(S_{C,0};A)$ where $S_{C,0}$ denotes a closed and connected surface obtained by attaching copies of the $2$-dimensional unit disk to $S_{C}$ by a PL homeomorphism, a piesewise homeomorphism, or a diffeomorphism between the boundaries. Furthermore, this cohomology group is free as a module over $A$. 
		\item
		\label{mthm:1.1.2}
		$H_2(P^{\prime};A)$ is isomorphic to and identified with the direct sum of $H_2(P;A)$ and $A$.
		\item
		\label{mthm:1.1.3}
		The cohomology group $H^{1}(P^{\prime};A)$ is isomorphic to and identified with the direct sum of $H^{1}(P;A)$, $H^{1}(S_{C,0};A)$, and $A^{l-1}$ where $A^{l_0}$ denote the free module over $A$ of rank $l_0 \geq 0$. If $H_1(P^{\prime};A)$ is free, then the cohomology group $H^2(P^{\prime};A)$ is isomorphic to the direct sum of $H^2(P;A)$ and $A$.		
	\end{enumerate}
	\item
	\label{mthm:1.2}
	If $A$ consists of elements whose orders are at most $2$, then we do not need to assume that $S_C$ is orientable in the previous situation.
\end{enumerate}
\end{MainThm}

A {\it piesewise smooth homotopy} means a homotopy considered in the piesewise smooth category. 

\begin{MainThm}[Theorem \ref{thm:2} (\ref{thm:2.1})]
\label{mthm:2.1}
Let $P$ be a branched surface, $N$ a surface with no boundary, and $c_0:P \rightarrow N$ a map {\rm (}locally{\rm )} born from an SSNS fold map.
Let $\{D_j\}$ be a family of finitely many copies of the $2$-dimensional unit disk which are disjointly embedded in $P-B(c_0)$ smoothly and regarded as subpolyhedra of $P$.
We also assume that $N$ is connected and that $N-c_0(P)$ is not empty. Then by a suitable piesewise smooth homotopy $F_c$ from $c_0$ to a new map $c:P \rightarrow N$ and apply Theorem \ref{mthm:1} by setting $T_j:=F_c(\partial D_j \times \{1\})$ and $S_C$ as a compact, connected and orientable surface of genus $0$.
Furthermore, if $P$ is connected, then we can also apply additional statements on homology groups and cohomology groups in Main Theorem \ref{mthm:1}.
\end{MainThm}
In the following main theorem, the notion of the {\it Heegaard genus} for a $3$-dimensional closed, connected and orientable manifold appears. 
In short, such a manifold is decomposed into two copies of a manifold diffeomorphic to one represented as a boundary connected sum of finitely many copies of $S^1 \times D^2$ along a closed, connected and orientable surface, which is the boundaries of these $3$-dimensional manifolds. The minimal number of the copies of $S^1 \times D^2$ we need is the {\it Heegaard genus} of the original $3$-dimensional closed, connected and orientable manifold. We can define this notion for the non-orientable case and in this case $S^1 \times D^2$ is replaced by the total space of a (PL) bundle over $S^1$ whose fiber is (PL) homeomorphic to $D^2$ and which is non-orientable as a $3$-dimensional manifold.

For example, the Heegaard genus of a $3$-dimensional sphere is $0$. A manifold represented as a connected sum of $g>0$ copies of $S^1 \times S^2$ is $g$. More generally, one represented as a connected sum of $g$ copies of the total space of a bundle over $S^1$ whose fiber is (PL) homeomorphic to $S^2$ is $g>0$.
See \cite{hempel} for example. We omit rigorous arguments on Heegaard genera of $3$-dimensional closed and connected manifolds in the present paper and see this for these arguments.
\begin{MainThm}[Theorem \ref{thm:2} (\ref{thm:2.2})]
\label{mthm:2.2}
In Main Theorem \ref{mthm:2.1}, we have the following two.
\begin{enumerate}
	\item Assume that we can embed $P$ as a PL or smooth submanifold in a $3$-dimensional closed and connected manifold of Heegaard genus $g$. Then we can construct and have a PL {\rm (}piesewise smooth{\rm )} embedding of $P^{\prime}$ into one whose Heegaard genus is $g+l-1$.
	\item Assume that we can embed $P$ as a PL or smooth submanifold in a $3$-dimensional closed, connected and orientable manifold of Heegaard genus $g$. Then we can construct and have PL {\rm (}piesewise smooth{\rm )} embeddings of $P^{\prime}$ into both some $3$-dimensional closed, connected and orientable manifold and some non-orientable one whose Heegaard genera are $g+l-1$.
\end{enumerate}
\end{MainThm}
This is a kind of explicit answers to Main Problem \ref{mprob:3}.
We also have the following two main theorems.
We define a {\it piesewise smooth} immersion in the fourth section. However, maybe we can guess the definition roughly.
\begin{MainThm}[Theorem \ref{thm:3} (\ref{thm:3.1})]
\label{mthm:3.1}
Let $P$ be a normal branched surface, $N$ an orientable surface with no boundary, and $c_0:P \rightarrow N$ a map born from an SSNS fold map. Then we have a {\rm piesewise smooth immersion} from $P$ into $Y:={\mathbb{R}}^3,S^3$. Furthermore, the piesewise smooth immersion is represented as $e_{N,Y} \circ i_{c_0,N}$ where the two maps are as follows.
\begin{enumerate}
\item $i_{c_0,N}:P \rightarrow N \times \mathbb{R}$ is a piesewise smooth immersion such that the composition with the canonical projection is $c_0$.
\item $e_{N,Y}$ is a piesewise smooth embedding into $Y$.
\end{enumerate}
\end{MainThm}
\begin{MainThm}[Theorem \ref{thm:3} (\ref{thm:3.2})]
\label{mthm:3.2}
In Main Theorem \ref{mthm:3.1}, suppose that the following conditions hold in addition.
\begin{itemize}
	\item $N$ is connected.
	\item $N-c_0(P)$ is not empty.
	\item $i_{c_0,N}:P \rightarrow N \times \mathbb{R}$ is a piesewise smooth embedding.
	\item The complementary set of the image $i_{c_0,N}(P)$ in $N \times \mathbb{R}$ is connected.

\end{itemize}
Then by applying Main Theorem \ref{mthm:2.1} with slightly revised arguments in the proof, we have a new piesewise smooth map $c:P \rightarrow N$ enjoying the following properties where we abuse the notation.
\begin{enumerate}
	\item We have a piesewise smooth embedding $i_{c,N}:P \rightarrow N \times \mathbb{R}$ such that the composition with the canonical projection is $c$.
	\item We have a piesewise smooth embedding $e_{N,Y} \circ i_{c,N}:P \rightarrow Y$.
\end{enumerate} 
\end{MainThm}

In the third and fourth sections, we discuss these results.
Related to the present study, we will also refer to systematic studies on global geometric properties on singularities of smooth maps between surfaces, shortly. Mainly, geometric and combinatorial studies such as \cite{yamamoto_t} by Takahiro Yamamoto are presented.

Remarks \ref{rem:1} and \ref{rem:2} are presented. They imply that we can generalize the present theory to cases where Reeb spaces are of general dimensions and that the present study is and can be one of the first explicit and systematic studies on new classes of simple fold maps. 

\section{Simple fold maps and their Reeb spaces.}
\begin{Def}
\label{def:3}
Let $m \geq n \geq 1$ be integers. Let $M$ be an $m$-dimensional closed and smooth manifold and $N$ be an $n$-dimensional smooth manifold with no boundary. A smooth map $f:M \rightarrow N$ is said to be a {\it simple fold} map if at each singular point $p$ $f$ is represented as
$$(x_1,\cdots,x_m) \rightarrow (x_1,\cdots,x_{n-1},{\Sigma}_{j=n}^{m-i(p)} {x_j}^2-{\Sigma}_{m-i(p)+1}^{m} {x_j}^2)$$
for suitable coordinates and a suitable integer $0 \leq i(p) \leq \frac{m-n+1}{2}$ and the restriction ${q_f} {\mid}_{S(f)}$ to the singular set is injective.
\end{Def}

Morse functions such that at mutually distinct singular points the values are always distinct account for a specific case or the case of $N=\mathbb{R}$ (where we need to and can perturb functions). Such functions are known to exist densely in the space of all smooth functions on any closed and smooth manifold where the space of the maps is endowed with the so-called {\it Whitney $C^{\infty}$ topology}. See \cite{golubitskyguillemin} as an introductory book on the singularity theory of differentiable maps including such notions and facts. 

So-called {\it special generic maps}, systematically studied in \cite{saeki2} for example, are also simple for example and the class of special generic maps contains Morse functions on homotopy spheres with two singular points and canonical projections of unit spheres.

In the present paper, we concentrate on cases $n= (\leq) 2$.
This class is a subclass of the class of {\it fold} maps, obtained by removing the condition on the injectivity of the map $q_f {\mid}_{S(f)}$.
For general or advanced systematic studies on fold maps, see \cite{golubitskyguillemin} before and see also \cite{eliashberg}, \cite{eliashberg2}, and \cite{saeki}, for example.

\begin{Prop}
\label{prop:1}
In Definition \ref{def:3}, $i(p)$ is unique.
The restrictions to $S(f)$ and the set of all singular points of any fixed index, shown to be {\rm (}$n-1${\rm )}-dimensional closed and smooth submanifolds with no boundaries for an arbitrary general fold map, are smooth immersions.
\end{Prop}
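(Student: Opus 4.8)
The plan is to reduce every assertion to the local normal form of Definition \ref{def:3} and, for the uniqueness claim, to identify the index $i(p)$ with a coordinate-free invariant. Note first that none of the assertions use the injectivity of ${q_f}{\mid}_{S(f)}$, so everything is established for arbitrary fold maps, as the statement requires.

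First, for the uniqueness of $i(p)$, I would pass to the intrinsic second derivative at a singular point $p$. Since $\mathrm{rank}\,{df}_p = n-1$, the kernel $K_p := \ker {df}_p \subset T_pM$ has dimension $m-n+1$ and the cokernel $C_p := T_{f(p)}N / \mathrm{im}\,{df}_p$ is $1$-dimensional. The Hessian of $f$ descends to a well-defined symmetric bilinear form $d^2f_p : K_p \times K_p \to C_p$, independent of all coordinate choices. Reading off the normal form, $d^2f_p$ is the quadratic form ${\Sigma}_{j=n}^{m-i(p)} {x_j}^2 - {\Sigma}_{j=m-i(p)+1}^{m} {x_j}^2$ restricted to the fiber directions, hence nondegenerate with $m-n+1-i(p)$ positive and $i(p)$ negative squares once a generator of the line $C_p$ is chosen. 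By Sylvester's law of inertia these two counts are invariant under linear changes of coordinates on $K_p$, while reversing the chosen generator of $C_p$ merely interchanges them. Thus the unordered pair of counts is an invariant of $f$ at $p$, and the constraint $0 \leq i(p) \leq \frac{m-n+1}{2}$ forces $i(p)$ to be the smaller of the two, proving uniqueness.

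Second, for the submanifold structure, I would compute the Jacobian of the normal form. The first $n-1$ component functions are the coordinates $x_1,\dots,x_{n-1}$, so the rank drops below $n$ exactly where the gradient of the last component vanishes in the fiber directions, that is, where $x_n = \cdots = x_m = 0$. Hence near $p$ the singular set $S(f)$ is the $(n-1)$-dimensional coordinate subspace $\{x_n = \cdots = x_m = 0\}$, a smooth submanifold with no boundary; compactness of $M$ makes $S(f)$ a closed subset and therefore a closed $(n-1)$-dimensional submanifold. Since the normal form is constant on this chart, the index function $p \mapsto i(p)$ is locally constant on $S(f)$, so the set of singular points of a fixed index is a union of connected components of $S(f)$ and is again a closed $(n-1)$-dimensional submanifold with no boundary. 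Finally, restricting the normal form to $S(f)$ gives $(x_1,\dots,x_{n-1}) \mapsto (x_1,\dots,x_{n-1},0)$, a smooth immersion; as this is a local property holding in every normal form chart, the restriction of $f$ to $S(f)$, and likewise to each fixed-index locus, is a smooth immersion.

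The step I expect to be most delicate is the first one: formulating the index intrinsically via $d^2f_p$ and verifying that the normal form genuinely realizes this intrinsic Hessian, together with the sign ambiguity arising from the $1$-dimensional cokernel $C_p$. Once Sylvester's law is invoked in the correct coordinate-free setting, the remaining computations follow immediately from the explicit local model.
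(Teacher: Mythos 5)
Your proof is correct and complete; the point to note is that the paper itself gives no proof of Proposition \ref{prop:1} at all --- it is stated as a fundamental fact of fold-map theory, with the background left to references such as \cite{golubitskyguillemin} --- so your write-up supplies an argument the paper omits, and it is the standard one from the literature. Your key step, identifying $i(p)$ with the smaller entry of the unordered signature pair of the intrinsic second derivative $d^2f_p\colon K_p\times K_p\rightarrow C_p$, is exactly the right invariant-theoretic formulation: since $C_p$ is a line, the quadratic form is defined only up to a nonzero scalar, positive scalars preserve the signature while negative ones swap the counts, so precisely the unordered pair $\{i(p),\,m-n+1-i(p)\}$ is coordinate-free, and the normalization $0\le i(p)\le\frac{m-n+1}{2}$ in Definition \ref{def:3} pins $i(p)$ down as the minimum --- this is why the definition carries that constraint in the first place. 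Your Jacobian computation for $S(f)$, the local constancy of the index along the fold locus, and the observation that the normal form restricted to $\{x_n=\cdots=x_m=0\}$ is $(x_1,\dots,x_{n-1})\mapsto(x_1,\dots,x_{n-1},0)$ are likewise the standard route, and you are right that the injectivity of $q_f{\mid}_{S(f)}$ is never used, matching the proposition's claim for arbitrary fold maps. One small slip of wording: $S(f)$ is a closed subset of $M$ automatically, since the locus where $\mathrm{rank}\,df\le n-1$ is cut out by the vanishing of the $n\times n$ minors of the Jacobian; compactness of $M$ is what then makes $S(f)$ compact, i.e.\ a closed manifold. This does not affect the correctness of your argument.
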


Fold maps such that the restrictions to the singular sets are embeddings are simple. {\it Round} fold maps, which are first defined by the author in \cite{kitazawa0.1,kitazawa0.2,kitazawa0.3}, satisfy this property. A {\it round} fold map is, in short, a fold map such that the restriction to the singular set is an embedding and that the image of the singular set is concentric.  

\begin{Def}
\label{def:4}
$i(p)$ is said to be the {\it index} of $p$ in Definition \ref{def:3}.
\end{Def}

Special generic maps are simple fold maps satisfying $i(p)=0$ for any singular point $p$.
We define the class of simple fold maps we concentrate on in the present paper.

\begin{Def}
\label{def:5}
A simple fold map in Definition \ref{def:3} is said to be {\it standard-spherical} if the following conditions are satisfied.
\begin{enumerate}
\item $m>n$.
\item The index of each singular point is always $0$ or $1$.
\item Preimages containing no singular points are always represented as disjoint unions of copies of the ($m-n$)-dimensional unit sphere.
\item If $m-n=1$ and there exists a singular point $p$ whose index is $i(p)=1$, then $M$ is orientable.
\end{enumerate}
\end{Def}

\begin{Prop}[E. g. \cite{kobayashisaeki, shiota}]
\label{prop:2}
For a simple fold map $f$ on an $m$-dimensional closed manifold into a surface with no boundary wherre $m \geq 3$, the Reeb space $W_f$ is a branched surface.
If the map is standard-spherical, then the Reeb space $W_f$ is a branched polyhedron where $q_f(S(f))=\sqcup C_j$ in Definition \ref{def:2}.
\end{Prop}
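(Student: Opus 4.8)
The plan is to reduce the statement to the local normal form of Definition~\ref{def:3} and then to assemble the resulting local pictures of the Reeb space along the singular value set, using the injectivity of $q_f {\mid}_{S(f)}$ to control how the pieces meet. First I would fix the global skeleton. By Proposition~\ref{prop:1} the singular set $S(f)$ is a closed $1$-dimensional submanifold of $M$, so it is a disjoint union of finitely many circles, and since $f$ is simple the restriction $q_f {\mid}_{S(f)}$ is injective; hence $q_f(S(f))$ is a disjoint union of finitely many circles embedded in $W_f$, which I take as the candidate family $\{C_j\}$ of Definition~\ref{def:2}. Compactness of $M$ gives compactness of $W_f=M/{\sim}_f$, and the triangulability results for fold maps cited in the statement make $W_f$ a $2$-dimensional polyhedron.

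Next I would identify the manifold part. Over the complement of the singular value set the relevant restriction of $f$ is a proper submersion, so by Ehresmann's fibration theorem the connected fibre components form a locally trivial family and each contributes a point of $W_f$ with a neighbourhood PL homeomorphic to a $2$-disk in $N$; the same holds at any point of $W_f$ represented by a regular fibre component even over a singular value, because $f$ is a submersion near such a component. Since a point of $W_f$ lies in $q_f(S(f))$ exactly when its fibre component contains a singular point, this shows that $W_f \setminus q_f(S(f))$ is a $2$-manifold with no boundary and, in particular, that $\dim W_f = 2$, giving condition~(2) of Definition~\ref{def:2} once the $C_j$ are fixed.

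The core of the argument is the computation of $W_f$ along $q_f(S(f))$ from the normal form, where the second target coordinate is a quadratic form $Q$ whose signature is governed by $i(p)$; locally $W_f$ is an arc along the fold times the local Reeb graph of $Q$. For $i(p)=0$ the form is definite, a sphere collapses to a point, and $W_f$ is a half-plane whose boundary is the fold image, which is type~(a). For $i(p)=1$ with $m\geq 4$ the form has signature $(m-2,1)$ with $m-2\geq 2$, its local Reeb graph is a triod (one positive branch, two negative branches), and $W_f$ is a triod bundle over the fold circle, which is type~(b), the trivial or order-$2$ structure group recording the monodromy of the two negative branches. For $i(p)=1$ with $m=3$ the pointwise model $x_2^2-x_3^2=b$ is a four-valent cross, so here I would pass to the semi-local picture: because $f$ is simple the singular fibre component through a fold point contains exactly one singular point, hence is a figure-eight, the transition is a merge or split of circles, and $W_f$ is again a triod bundle. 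Assembling these models along the disjoint circles $q_f(S(f))$ yields a branched surface, and the injectivity of $q_f {\mid}_{S(f)}$ guarantees that distinct fold images never cross in $W_f$, so the $C_j$ are indeed disjoint, giving conditions~(1) and~(3).

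Finally, for the standard-spherical case I would verify $q_f(S(f))=\sqcup_j C_j$ and control the monodromy. The only way a fold image could escape being a branch curve is a ``one component to one component'' index-$1$ transition embedding the fold arc in the manifold interior; the sphere condition of Definition~\ref{def:5}, together with the figure-eight analysis above, rules this out, so every fold arc is a genuine type-(a) or type-(b) curve and the equality holds. I expect the main obstacle to be exactly this semi-local index-$1$ analysis when $m=3$: the pointwise normal form does not display the branched-surface structure, and one must argue that the circle fibres reconnect as a $2$-to-$1$ merge and then pin down the $\mathbb{Z}/2$ monodromy of the two negative sheets, which is where the orientability clause of Definition~\ref{def:5} is needed.
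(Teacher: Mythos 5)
The paper does not actually prove Proposition \ref{prop:2}: it is quoted from the literature, with the polyhedron structure attributed to \cite{shiota} and the semi-local analysis of the branching to section 2 of \cite{kobayashisaeki} (see also \cite{saekisuzuoka} for the sphere-fiber transitions). Your outline follows exactly that standard route (triangulability from the cited results, Ehresmann off the singular values, normal-form analysis along $q_f(S(f))$, figure-eight fibers for $m=3$), so the strategy is the right one; nevertheless it has two genuine gaps. First, Definition \ref{def:3} with $n=2$ allows $0 \leq i(p) \leq \frac{m-1}{2}$, so for $m \geq 5$ there are fold circles of index $i(p) \geq 2$, which your case analysis never touches, although the first sentence of the proposition concerns all simple fold maps. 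The case is harmless but must be said: the quadratic form then has signature $(m-i(p)-1,\, i(p))$ with both entries at least $2$, so the nearby level sets are locally connected on both sides, the transverse Reeb picture is an arc, and such fold circles consist of manifold points of $W_f$, to be left out of the family $\{C_j\}$.

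Second, and more seriously, your third paragraph asserts that for $i(p)=1$ and $m \geq 4$ the fold circle automatically acquires a type (b) triod-bundle neighborhood. The passage from the local Reeb graph of $Q$ to the semi-local Reeb space of $f$ is not valid: the two negative local sheets may lie in one and the same global fiber component (a non-splitting, ''one component to one component'' transition), and nothing in the normal form prevents this for $m \geq 4$ any more than for $m=3$; then the image of that fold circle again consists of manifold points of $W_f$. You notice this phenomenon only in the $m=3$ figure-eight discussion, so as written your proof of the first claim fails at such circles (they must simply be assigned to the manifold part), and your proof of the second claim is under-justified for $m \geq 4$: the sentence ''the sphere condition rules this out'' needs the actual surgery argument, namely that if the connected-side component is a sphere $S^{m-2}$, the transition is a surgery along an embedded copy of $S^{m-3}$ with trivial normal bundle, which separates $S^{m-2}$ because $H_{m-3}(S^{m-2};\mathbb{Z}/2)=0$ for $m \geq 4$, so the other side necessarily has two components; the figure-eight analysis is purely $1$-dimensional and does not cover this. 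For $m=3$ your appeal to the orientability clause of Definition \ref{def:5} is correct, via the fact that the preimage of a transverse arc is a two-sided, hence orientable, surface in $M$, while a one-to-one saddle forces a Möbius band in it. Finally, you use implicitly that the transition type is constant along each fold circle, so that each $C_j$ has a uniform neighborhood type; this follows from the local triviality of the semi-local structure along $q_f(S(f))$ together with the injectivity of $q_f {\mid}_{S(f)}$ and deserves a line. With these repairs your argument is complete and coincides with the proof the paper outsources to its references.
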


As the author knows, one of most general theorems of this type is given by \cite{shiota}. This says that for a smooth map of a very general class (, or a {\it Thom} map, which we do not define in the present paper,) the Reeb space is a polyhedron.

\cite{saeki3,saeki4,sakuma,yonebayashi} concentrate on standard-spherical or general simple fold maps into the plane, a surface, or a higher dimensional Euclidean space, and closed manifolds admitting such maps.
Especially, \cite{saeki4} is on $3$-dimensional closed, connected and orientable manifolds admitting simple fold maps (, which are as a result standard-spherical,) and presents a theorem that such a manifold admits a simple fold map into the plane if and only if it is a so-called {\it graph} manifold.
\cite{saeki3,yonebayashi} are closely related studies. 

\cite{turaev} presents the notion of so-called {\it shadows} of $3$-dimensional closed and orientable manifolds. They are in specific cases regarded as the Reeb spaces of simple fold maps into the plane on $3$-dimensional closed and orientable manifolds. They are also regarded as the Reeb spaces of more general fold maps into the plane on $3$-dimensional closed and orientable manifolds and may not be regarded as branched surfaces in Definition \ref{def:2}. These studies have born \cite{ishikawakoda} for example. 

PL embeddings of so-called {\it multibranched surfaces} into $3$-dimensional closed, connected, and orientable manifolds, are studied in \cite{matsuzakiozawa, munozozawa, ozawa} for example. Most of our branched surfaces are regarded as multibranched surfaces of some good classes.
 
Hereafter, we concentrate on the map $\bar{f}$ and continuous (PL) maps locally or globally regarded as such maps. 

\begin{Def}
\label{def:6}
For a simple fold map $f$ and the Reeb space $W_f$ in Proposition \ref{prop:2}, if the bundles whose fibers are PL homeomorphic to $K$ in Definition \ref{def:2} are always trivial, then $f$ is said to be {\it normal}. A branched surface satisfying the condition is also said to be {\it normal}.
\end{Def}

In the present paper, for simple fold maps, we mainly concentrate on standard-spherical and normal ones. We also call such a fold map an {\it SSNS fold} map.
 

The following presents fundamental facts or follows immediately from fundamental arguments in the theory of simple fold maps. We omit its proof. Consult section 2 of \cite{kobayashisaeki}, presented in Proposition \ref{prop:2}, for example. The proof of Proposition \ref{prop:2} is presented or we can prove the following proposition by applying arguments there in a suitable way.

\begin{Prop}
\label{prop:3}
For an SSNS fold map $f$, the following properties hold where $N$ is naturally regarded as a polyhedron and where the total spaces of trivial bundles are identified with the products of spaces in canonical ways.
\begin{enumerate}
\item For any sufficiently small open submanifold in $P-{\sqcup}_{j} C_j$ in Definition \ref{def:2}, the restriction of $\bar{f}$ there is a diffeomorphism onto an open submanifold in $N$ where the target is restricted there from $N$. Furthermore, the restriction to $P-{\sqcup}_{j} C_j$ is a smooth immersion. 
\item For $C_j$ in Definition \ref{def:2} such that $N(C_j)$ is the total space of a trivial PL bundle over $C_j$ whose fiber is PL homeomorphic to $[0,1]$ and each point $p \in C_j$ and a suitable small open neighborhood $U_{C,j,p} \subset C_j$ diffeomorphic to an open interval, ${\bar{f}} {\mid}_{U_{C,j,p} \times [0,1] \subset C_j \times [0,1]}$ is, by suitable coordinate transformations preserving the structures of the products of the manifolds, regarded as the product map of a piesewise smooth homeomorphism onto a closed interval from $[0,1]$ and a piesewise smooth homeomorphism from $U_{C,j,p}$ onto an open interval in the singular value set. Furthermore, the product of these two manifolds of the targets is a smooth and PL submanifold of $N$ and the manifold of the target of the restriction of $\bar{f}$ is restricted to the product manifold.
\item For $C_j$ in Definition \ref{def:2} such that $N(C_j)$ is the total space of a trivial PL bundle over $C_j$ whose fiber is PL homeomorphic to $K$ and each point $p \in C_j$ and a suitable small open neighborhood $U_{C,j,p} \subset C_j$ diffeomorphic to an open interval, ${\bar{f}} {\mid}_{U_{C,j,p} \times K \subset C_j \times K}$ is, by suitable coordinate transformations preserving the structures of the products of the manifolds, represented as the product map of a piesewise smooth map onto a closed interval and a piesewise smooth homeomorphism from $U_{C,j,p}$ onto an open interval in the singular value set. Furthermore, the product of these two manifolds of the targets is a smooth and PL submanifold of $N$ and the manifold of the target of the restriction of $\bar{f}$ is restricted to the product manifold. Furthermore, the piesewise smooth map onto a closed interval from $K$ is defined as the composition of the projection to $[-\frac{1}{2},1]$ obtained by considering the projection to the first component with a piesewise smooth map for scaling.
\end{enumerate}
\end{Prop}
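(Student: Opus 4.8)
The plan is to read every assertion off the local normal form of $f$ in Definition \ref{def:3} with $n=2$, pushed forward through the quotient map $q_f$, using Proposition \ref{prop:1} to control the singular set and the standard-spherical and normal hypotheses (Definitions \ref{def:5} and \ref{def:6}) to control the connected components of preimages. First I would dispose of statement (1): away from $S(f)$ the map $f$ is a submersion, so near any point of $M-S(f)$ it is locally the projection $(x_1,\dots,x_m)\mapsto(x_1,x_2)$, and by the standard-spherical hypothesis each connected component of a regular preimage is a single copy of $S^{m-2}$. Hence $q_f$ collapses each such sphere to one point and a small saturated open set maps homeomorphically onto an open subset of $N$, so $\bar f$ restricted to a sufficiently small open subset of $P-\sqcup_j C_j$ is a diffeomorphism onto an open submanifold of $N$; being a local diffeomorphism everywhere on the surface $P-\sqcup_j C_j$, it is in particular a smooth immersion.

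For statements (2) and (3) I would work in the normal coordinates at a singular point $p$, where the form is
\[
(x_1,\dots,x_m)\mapsto\Bigl(x_1,\ \sum_{j=2}^{m-i(p)} x_j^2-\sum_{m-i(p)+1}^{m} x_j^2\Bigr).
\]
By Proposition \ref{prop:1} the index $i(p)$ is well defined and $S(f)$ is an immersed $1$-manifold, while simplicity makes $q_f|_{S(f)}$ injective, so on a small neighborhood the singular value set is an embedded arc; I would take a product chart $U_{C,j,p}\times I$ of $N$ in which this arc is $U_{C,j,p}\times\{0\}$ and $I$ is the transverse interval with coordinate $y_2$. In the index-$0$ (definite) case, set $y_2=\sum_{j=2}^m x_j^2$: the preimage of $(x_1,y_2)$ is one sphere of radius $\sqrt{y_2}$ for $y_2>0$, the single singular point for $y_2=0$, and empty for $y_2<0$. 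Thus the local Reeb space is parametrized by $(x_1,r)$ with $r:=\sqrt{y_2}\in[0,1]$, giving exactly the trivial $[0,1]$-bundle of Definition \ref{def:2}, and in these coordinates $\bar f$ is the product of a homeomorphism of $U_{C,j,p}$ onto an arc of the singular value set with the map $r\mapsto r^2$; the latter is a piesewise smooth homeomorphism of $[0,1]$ onto a closed interval, smooth and regular away from $0$ and failing to be a diffeomorphism only at the fold image. This is statement (2).

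For the index-$1$ (indefinite) case I would analyze the fibers of $\sum_{j=2}^{m-1} x_j^2-x_m^2=y_2$. When $m\ge 4$ the locus is connected for $y_2>0$ (one sphere) and has the two components $x_m\ge\sqrt{-y_2}$ and $x_m\le-\sqrt{-y_2}$ for $y_2<0$ (two spheres, as forced by the standard-spherical hypothesis), the case $m=3$ requiring the orientability condition (4) of Definition \ref{def:5} to guarantee the same $1$-to-$2$ branching rather than a reconnection. Collapsing components identifies the local Reeb space with $U_{C,j,p}\times K$, the single prong of $K$ recording the $y_2\ge 0$ side and the two remaining prongs recording the two $y_2\le 0$ components; normality makes this bundle trivial. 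Reading $\bar f$ off the parametrization of $K$, the prong at $t=0$ maps onto $[0,1]$ and the prongs at $t=\frac{2}{3}\pi,\frac{4}{3}\pi$ map onto $[-\frac{1}{2},0]$, which is precisely the first-component projection of $K$ onto $[-\frac{1}{2},1]$ postcomposed with the scaling $r\mapsto r^2$ used above; this yields statement (3).

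The main obstacle I expect is not any single fiber computation but the bookkeeping that makes these local pictures genuinely \emph{product} maps: one must verify that the coordinate changes bringing $f$ to the normal form can be chosen to respect the product structure $U_{C,j,p}\times(\text{fiber})$ of both the source neighborhood and the target chart $U_{C,j,p}\times I$ \emph{simultaneously}, so that the collapsed description of $\bar f$ is literally a product of a fiber map with a homeomorphism of the arc $U_{C,j,p}$, and that the quadratic behavior at the fold is packaged cleanly into the single scaling factor $r\mapsto r^2$ rather than leaking into the base direction. This is exactly the content carried out in Section 2 of \cite{kobayashisaeki}, whose arguments I would adapt.
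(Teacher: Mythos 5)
Your proposal is correct and takes essentially the same route as the paper: the paper in fact omits a proof of this proposition, stating that it follows from fundamental arguments in the theory of simple fold maps and deferring to section 2 of \cite{kobayashisaeki}, which is precisely the local-normal-form fiber analysis you carry out (definite folds yielding the trivial $[0,1]$-fiber with the quadratic scaling, indefinite folds yielding the $K$-fiber with the first-coordinate projection, and condition (4) of Definition \ref{def:5} excluding the non-orientable reconnection when $m=3$). Your closing point about the product-structure bookkeeping is also deferred to the same reference the paper cites, so there is no substantive divergence.
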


The following is an important fact: "a pisewise smooth homeomorphism onto a closed interval from $[0,1]$" and "the piesewise smooth map onto a closed interval from $K$" in Proposition \ref{prop:3} are regarded as the compositions of natural smooth, piecewise smooth, or PL embeddings into ${\mathbb{R}}^2$, with canonical projections.

\begin{Def}
\label{def:7}
A continuous (pisewise smooth) map $c$ on a branched surface $P$ into a surface $N$ with no boundary is said to be {\it locally born from an SSNS fold map} if the following two hold where $N$ is regarded as a polyhedron.
\begin{itemize}
\item For each point $p \in P$, there exist a small regular neighborhood $N(p)$ of $p$, a regular neighborhood $N(c(p))$ of $c(p)$, a simple fold map $f_p$ on a closed manifold $M_p$ whose dimension is greater than $2$ into a surface $N_p$ with no boundary which is regarded as a polyhedron and subpolyhedra $N(p,M_p) \subset W_{f_p}$ and $N(c(p),N_p) \subset N_p$.
\item There exists a pair $({\Phi}_p:N(p) \rightarrow N(p,M_p),{\phi}_p:N(c(p)) \rightarrow N(c(p),N_p))$ of piesewise smooth homeomorphisms satisfying $\bar{f_p} {\mid}_{N(p,M_p)} \circ {\Phi}_p={\phi}_p \circ c {\mid}_{N(p)}$ for each $p \in P$ where for the maps $\bar{f_p} {\mid}_{N(p,M_p)}$ and $c {\mid}_{N(p)}$ the polyhedra of the targets are suitably restricted. 
\end{itemize}
As a specific case, $c$ is said to be {\it born from an SSNS fold map} if the following two hold.
\begin{itemize}
\item There exists a simple fold map $f$ on a closed manifold $M$ whose dimension is greater than $2$ into a surface $N$ with no boundary which is regarded as a polyhedron.
\item There exists a pair $(\Phi:P \rightarrow W_f,\phi:N \rightarrow N)$ of piesewise smooth homeomorphisms satisfying $\bar{f} \circ \Phi=\phi \circ c$.
\end{itemize}
\end{Def}

If the branched surface $P$ is normal, then $c$ is a map born from an SSNS fold map. If the branched surface $P$ is not normal, then $c$ is a map locally born from an SSNS fold map which is not one born from an SSNS fold map. 

In the present paper, for example in Theorems \ref{thm:1} and \ref{thm:2}, we can consider maps locally born from SSNS fold maps which are not maps born from SSNS fold maps. However, we essentially consider maps born from SSNS fold maps only.

A map locally born from an SSNS fold map which is not one born from an SSNS fold map is presented in section 7 of \cite{kobayashisaeki} for example. 

The following proposition works as a fundamental principle and for example, presented articles on (simple) fold maps will help us to understand this. We also present a sketch of a proof. 
\begin{Prop}
\label{prop:4}
Let $m \geq 3$ be an integer.
If a continuous {\rm (}piesewise smooth{\rm )} map $c:P \rightarrow N$ is born from an SSNS fold map and the restriction to ${\sqcup}_{j} C_j$ is a smooth immersion, then there exist an $m$-dimensional closed manifold $M$, an SSNS fold map $f:M \rightarrow N$ and a piesewise smooth homeomorphism $\phi:P \rightarrow W_f$ such that $\bar{f} \circ \phi=c$. Thus $c$ is born from an SSNS fold map.
\end{Prop}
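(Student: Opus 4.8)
The plan is to realise $(M,f)$ by a direct fibrewise construction over the branched surface $P$ itself, taking the regular fibre to be $S^{m-2}$ so that $\dim M=2+(m-2)=m$; this is exactly where the freedom in the prescribed $m\geq 3$ is used. The construction is the converse of Proposition \ref{prop:3}: since $c$ is born from an SSNS fold map, that proposition tells us the precise local shape of $c$---a smooth immersion on the $2$-manifold part $P-{\sqcup}_j C_j$, the product of a fold quotient of $[0,1]$ with a parametrisation of a singular value arc along each type-(a) circle, and the analogous product for the tripod $K$ along each type-(b) circle---and the task is to fit genuine fold normal forms over these local pictures. Because we feed $c$ directly into the construction as the target data, the resulting Reeb map will equal $c$ on the nose, with no auxiliary self-homeomorphism of $N$.

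I would build $M$ one stratum of $P$ at a time. Over each connected component of $P-{\sqcup}_j C_j$ I put the trivial bundle with fibre $S^{m-2}$ and let $f$ be $c$ composed with the bundle projection; by the first item of Proposition \ref{prop:3} this is a smooth submersion and so contributes no singular points. Over each type-(a) circle, whose neighbourhood is a product $C_j\times[0,1]$ by Definition \ref{def:2}, I glue in the disc bundle $C_j\times D^{m-1}$ and let $f$ be the standard definite fold collapsing each boundary sphere to the centre of its disc; its boundary $C_j\times S^{m-2}$ matches the sphere bundle of the incident sheet. Over each type-(b) circle I insert the product $C_j\times\Sigma$, where $\Sigma$ is the elementary $(m-1)$-dimensional cobordism joining two copies of $S^{m-2}$ to a single $S^{m-2}$ by a single index-$1$ handle, and let $f$ be the corresponding indefinite fold; normality (Definition \ref{def:6}) makes the $K$-bundle trivial, so this piece really is a product and its three boundary spheres meet the three incident sheets. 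Since the connected sum of two spheres is a sphere, the regular fibres remain spheres and only the indices $0$ and $1$ occur, so $f$ is standard-spherical in the sense of Definition \ref{def:5}; when $m=3$ the orientability clause there is secured by orienting each $\Sigma$ coherently.

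The resulting $f:M\to N$ is assembled from these smooth normal forms. The hypothesis that $c|_{{\sqcup}_j C_j}$ is a smooth immersion is used precisely here: the singular value set of a fold map is the image of a smooth immersion, and in our construction it is exactly $c({\sqcup}_j C_j)$, so the fold forms can be laid down smoothly along these curves. By construction $\bar f\circ\phi=c$ on the regular part, where $\phi:P\to W_f$ is the natural homeomorphism induced by the fibration structure; along the branch circles the two sides agree because the fold quotients reproduce the local form of $c$ recorded in the second and third items of Proposition \ref{prop:3}. Hence $f$ is an SSNS fold map on a closed $m$-manifold with $W_f$ homeomorphic to $P$ and $\bar f\circ\phi=c$, which also re-exhibits $c$ as born from an SSNS fold map.

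The step I expect to be the main obstacle is the global smooth assembly---upgrading the stratum-wise pieces to a single closed $C^\infty$ manifold $M$ and a globally smooth $f$, rather than a merely piecewise smooth object. One must check that the identifications $C_j\times S^{m-2}$ close up consistently around each circle, and it is here that normality is essential, since it removes the possible order-$2$ monodromy of the type-(b) bundle and keeps every local piece a product. One must also smooth the transition, in a collar of each $C_j$, between the submersive region and the fold normal forms; the immersion hypothesis on ${\sqcup}_j C_j$ is exactly what permits this collar smoothing along the singular value curves. Once these consistency and smoothing points are settled, verifying $W_f\cong P$ and $\bar f\circ\phi=c$ is immediate from the construction.
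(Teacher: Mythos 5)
Your proposal is correct and is essentially the paper's own argument: the paper likewise builds $M \to P$ fibrewise, taking a trivial $S^{m-2}$-bundle over the complement of the regular neighborhoods $N(C_i)$, and over each circle the product of the identity on $C_i$ with either a height function on $D^{m-1}$ (type-(a)) or the one-critical-point Morse function on $S^{m-1}$ minus three open $(m-1)$-disks, which is exactly your elementary cobordism $\Sigma$ (type-(b)), and then glues these pieces and composes the resulting quotient-like map $M \to P$ with $c$ to obtain $f$. The assembly and smoothing issues you flag (triviality of the $K$-bundles from normality, and the smooth-immersion hypothesis on ${\sqcup}_j C_j$ permitting the fold normal forms along the singular value curves) are precisely the points where the paper's sketch invokes the same hypotheses, so no genuinely different route is taken.
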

\begin{proof}[{\rm (}A sketch of{\rm )} a proof.]
Take a small regular neighborhood $N(C_i)$ for each connected component $C_i$ in Definition \ref{def:2}.
We consider the product map of a Morse function satisfying either of the following two and the identity map on $C_i$ preserving the structures of the products for suitable coordinates.
\begin{itemize}
\item A Morse function on a copy of the unit disk $D^{m-1}$ represented by the form $(x_1,\cdots,x_{m-1}) \mapsto \pm {\Sigma}_{j=1}^{m-1} {x_j}^2+c_i$ for a real number $c_i$ for suitable coordinates. It is a so-called {\it height function} of a unit disk.
\item A Morse function on a smooth manifold diffeomorphic to one obtained in the following way.
\begin{itemize}
\item Take a copy of the unit disk $S^{m-1}$.
\item Remove the interiors of three disjointly and smoothly embedded copies of the ($m-1$)-dimensional unit disk $D^{m-1}$.
\end{itemize}
This function enjoys the following three properties.
\begin{itemize}
\item The preimage of an extremum is one or the disjoint union of two of the three connected components of the boundary.
\item The preimage of the remaining extremum is the disjoint union of the remaining connected components of the boundary. 
\item There exists exactly one singular point and it is in the interior of the manifold of the domain.
\end{itemize}
\end{itemize}
We consider the quotient map onto the Reeb space for the product map. By composing the map with a suitable piesewise smooth homeomorphism, we can have a desired map onto $N(C_i)$.
Over the complementary set of ${\sqcup}_i {\rm Int}\ N(C_i)$ in $P$, we can easily have a trivial smooth bundle whose fiber is diffeomorphic to $S^{m-2}$. We can glue the obtained maps in a canonical and suitable way and compose the resulting map with $c$. We have a desired map. 
\end{proof}

Hereafter, to make discussions simpler, we define a map $c$ in Proposition \ref{prop:4} as a map {\it born from an SSNS fold map}. Note that in fact we can discuss this proposition for a map locally born from an SSNS fold map which may not be born from an SSNS fold map and that we can define a similar notion for such a case. In short, we need to twist the smooth family of local Morse functions from the product map and we can do this. We also need to consider a smooth bundle which is not trivial over the complementary set of ${\sqcup}_i {\rm Int}\ N(C_i)$ in $P$  and we can do this. Except these arguments for example, we can argue similarly.

If $P$ is merely a surface in the definition, then $c$ is regarded as a smooth immersion into $N$ (if $\partial P$ is not empty) or a smooth map on a closed manifold into a closed surface with no singular points.   
Smooth immersions of curves and compact surfaces with boundaries into the plane or surfaces have been studied systematically and actively and there remain various difficult problems on the theory, seeming to be easy. We do not need advanced knowledge on the field. However, these studies can play important roles in studies related to the present study. We introduce several articles shortly. For example, \cite{whitney, whitney2} are classical important studies. Related studies are in \cite{chillingworth, kauffman, yamamoto_m} for example.

\section{Main Theorems and related studies.}
For a set $X$, ${\rm id}_X$ denotes the identity morphism.
Let $X$ be a compact polyhedron, $Y$ a polyhedron and $c:X \rightarrow Y$ a continuous map. Suppose that a {\it {\rm PL} isotopy}, defined as an isotopy considered in the piecewise smooth category ${\Phi}:Y \times [0,1] \rightarrow Y$ such that $\Phi(y,0)=y$, exists. We define ${\Phi} \circ (c \times {\rm id}_{[0,1]}):X \times [0,1] \rightarrow Y$ as a {\it {\rm P}-isotopy for} $c$.
For a branched surface $P$, a surface $N$ with no boundary, and a map $c:P \rightarrow N$ locally born from an SSNS fold map, let $B(c)$ denote the disjoint union $\sqcup C_j$ in Definition \ref{def:2}. This subset does not depend on such a map $c$ and depends only on the branched surface of the domain of the map (locally born from an SSNS fold map).
We present a fundamental method of constructing new branched surfaces and maps locally born from SSNS fold maps. 
 
We explain shortly about fundamental notions and notation on algeberaic topology. See \cite{hatcher} to study systematically.
 
Let $A$ be a commutative ring with a unique identity element different from the zero element and $X$ be a topological space. 
Let $X^{\prime} \subset X$ be a subspace.

$H_j(X,X^{\prime};A)$ denotes the {\it $j$-th homology group} of the pair $(X,X^{\prime})$ whose {\it coefficient ring} is $A$. 
If $X^{\prime}$ is empty, then $H_j(X;A)$ is also used instead and it is the {\it $j$-th homology group} of $X$ whose {\it coefficient ring} is $A$.
$H^j(X,X^{\prime};A)$ denotes the {\it $j$-th cohomology group} of the pair $(X,X^{\prime})$ whose {\it coefficient ring} is $A$. If $X^{\prime}$ is empty, then $H^j(X;A)$ is also used instead and it is the {\it $j$-th cohomology group} of $X$ whose {\it coefficient ring} is $A$. $H^{\ast}(X;A):={\oplus}_{j} H^j(X;A)$ denotes the direct sum of the $j$-th cohomology group $H^j(X;A)$ for all integers $j$. This has the structure of a graded ring where each product is given by the so-called {\it cup product}. This is the cohomology ring of $X$ whose {\it coefficient ring} is $A$.

The fundamental group of an arcwise-connected topological space $X$ is defined and denoted by ${\pi}_1(X)$.

For two pairs of topological spaces $(X_1,{X_1}^{\prime})$ and $(X_2,{X_2}^{\prime})$ satisfying the relation ${X_i}^{\prime} \subset X_i$ and a continuous map $c:X_1 \rightarrow X_2$ satisfying $c({X_1}^{\prime}) \subset {X_2}^{\prime}$, we can define the {\it canonical homomorphisms} $c_{\ast}:H_j(X_1;{X_1}^{\prime}) \rightarrow H_j(X_2;{X_2}^{\prime})$ and $c^{\ast}:H^j(X_2;{X_2}^{\prime}) \rightarrow H^j(X_1;{X_1}^{\prime})$
{\it induced by $c$}. If $X_1$ and $X_2$ are arcwise-connected and ${X_1}^{\prime}$ and ${X_2}^{\prime}$ are empty, then we can define the {\it canononical homomorphism} $c_{\ast}:{\pi}_1(X_1) \rightarrow {\pi}_1(X_2)$ {\it induced by $c$}.

For any homology class $h_c \in H_k(X;A)$ we cannot represent by the form $r{h_c}^{\prime}$ for any element $r$ which is not a unit in the coefficient ring $A$ and any class ${h_c}^{\prime}$ which is not the zero element and an basis $\mathcal{B}$ of the submodule generated by all free elements of $H_k(X;A)$ satisfying $h_c \in \mathcal{B}$, we have the element ${h_c}^{\mathcal{B},\ast} \in H^k(X;A)$ enjoying the following properties uniquely. 
\begin{itemize}
\item ${h_c}^{\mathcal{B},\ast}(h_c)=1$ where $1$ denotes the identity element.
\item ${h_c}^{\mathcal{B},\ast}(\mathcal{B}-\{h_c\})$ is the trivial group.
\end{itemize}
 This is called the {\it cohomology dual} ${h_c}^{\mathcal{B},\ast}$ to $h_c$ on the {\it basis $\mathcal{B}$}. We omit the exposition on the basis in considerable cases where we can guess the basis naturally. 

It is well known that for a compact, connected and oriented manifold $X$ whose boundary $\partial X$ may not be empty, we have a unique element $u_X \in H_{\dim X}(X,\partial X;A)$ which is a generator of the group, isomorphic to $A$ as a module over $A$. This is the {\it fundamental class} of $X$ (where the {\it coefficient ring} is $A$). For the element, we do not need an orientation where the orders of elements there are at most $2$.

We also need some theorems and methods such as Mayer-Vietoris sequences and Seifert van Kampen theorem. We omit precise expositions.

\begin{Thm}
\label{thm:1}
Let $P$ be a  branched surface, $N$ a surface with no boundary, and $c:P \rightarrow N$ a map {\rm (}locally{\rm )} born from an SSNS fold map.
Let $\{T_j\}_j$ be a family of finitely many circles which are disjointly embedded in $P-B(c)$ smoothly and regarded as subpolyhedra of $P$. Let the size of the family $\{T_j\}_j$ be $l>0$.
Let ${\bigcup}_{j} c(T_j)$ be the image of the boundary of a compact and connected surface $S_C$ smoothly immersed into $N$.
Then by attaching a surface homeomorphic to $S_C$ along ${\sqcup}_j T_j$ on the boundary by a PL homeomorphism, a piesewise homeomorphism, or a diffeomorphism, we have a new {\rm (}normal{\rm )} branched surface $P^{\prime}$ and a map $c^{\prime}:P^{\prime} \rightarrow N$ {\rm (}resp. locally{\rm )} born from an SSNS fold map.  

Furthermore, we have the following two where $A$ is a principal ideal domain having a unique identity element different from the zero element.
\begin{enumerate}
\item 
\label{thm:1.1}
Let the value of the homomorphism induced by the inclusion of the {\rm (}suitably oriented{\rm )} circle $T_j$ at the fundamental class is the zero element of $H_1(P;A)$. Assume that $P$ is connected. Assume also that $S_C$ is orientable. Then we have the following information on the homology groups, the cohomology groups. the cohomology rings and the fundamental groups.
\begin{enumerate}
\item
\label{thm:1.1.1}
 $H_1(P^{\prime};A)$ is isomorphic to and identified with the direct sum of $H_1(P;A)$, $A^{l-1}$, and $H_1(S_{C,0};A)$ where $S_{C,0}$ denotes a closed and connected surface obtained by attaching copies of the $2$-dimensional unit disk to $S_{C}$ by a diffeomorphism between the boundaries. Furthermore, this cohomology group is free as a module over $A$. 
\item
\label{thm:1.1.2}
 $H_2(P^{\prime};A)$ is isomorphic to and identified with the direct sum of $H_2(P;A)$ and $A$.
\item
\label{thm:1.1.3}
 The cohomology group $H^{1}(P^{\prime};A)$ is isomorphic to and identified with the direct sum of $H^{1}(P;A)$, $H^{1}(S_{C,0};A)$, and $A^{l-1}$ where $A^{l_0}$ denote the free module over $A$ of rank $l_0 \geq 0$. If $H_1(P^{\prime};A)$ is free, then the cohomology group $H^2(P^{\prime};A)$ is isomorphic to the direct sum of $H^2(P;A)$ and $A$. Furthermore, we have the following facts on the cohomology rings and the fundamental groups.
\begin{enumerate}
\item
\label{thm:1.1.3.1}
 $H^{1}(P;A) \oplus \{0\} \subset H^{1}(P^{\prime};A)$ generates a subalgebra of the cohomology ring $H^{\ast}(P^{\prime};A)$ and this is isomorphic to the cohomology ring $H^{\ast}(P;A)$ where $\{0\}$ denotes the trivial group.
\item 
\label{thm:1.1.3.2}
$\{0\} \oplus H^{1}(S_{C,0};A) \oplus \{0\} \subset H^{1}(P^{\prime};A)$ generates a subalgebra of the cohomology ring $H^{\ast}(P^{\prime};A)$ and this is isomorphic to the cohomology ring $H^{\ast}(S_{C,0};A)$ where $\{0\}$ denotes the trivial group.
\item
\label{thm:1.1.3.3}
 The cup product of each element in $\{0\} \oplus H^{1}(S_{C,0};A) \oplus A^{l-1} \subset H^{1}(P^{\prime};A)$ and each element in $\{0\} \oplus A^{l-1} \subset H^{1}(P^{\prime};A)$ is always the zero element $H^{2}(P^{\prime};A)$ where $\{0\}$ denotes the trivial group.
\item
\label{thm:1.1.3.4}
 Assume also that the value of the homomorphism induced by the inclusion of the circle $T_j$ at a generator of the fundamental group, isomorphic to $\mathbb{Z}$, is the zero element of ${\pi}_1(P)$.
The fundamental group of $P^{\prime}$ is isomorphic to the free product of the group ${\pi}_1(P)$ and $l-1$ copies of the group $\mathbb{Z}$, which is the group of all integers.
\end{enumerate}
\end{enumerate}
\item
\label{thm:1.2}
 If $A$ consists of elements whose orders are at most $2$, then we do not need to assume that $S_C$ is orientable in the previous situation.
\end{enumerate} 
\end{Thm}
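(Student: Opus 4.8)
The plan is to establish the four layers of the statement in order --- the construction of $(P^{\prime},c^{\prime})$, the additive (co)homology, the ring and group structure --- using Mayer--Vietoris, the universal coefficient theorem, naturality of the cup product, and the Seifert--van Kampen theorem as the principal tools. First I would verify the construction. Each $T_j$ lies in the smooth $2$-manifold part $P-B(c)$, so (when it is two-sided) it carries a collar $T_j\times(-\varepsilon,\varepsilon)$; gluing a boundary component of $S_C$ along $T_j$ makes three half-sheets meet along $T_j$, so that $T_j$ becomes a new branch circle whose regular neighborhood carries the fiber $K$ of Definition \ref{def:2}, with trivial bundle (hence $P^{\prime}$ normal) when the collar is respected and with structure group of order $2$ otherwise. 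Extending $c$ by the given immersion $S_C\to N$, which agrees with $c|_{\sqcup_j T_j}$ on the boundary by hypothesis, gives a continuous $c^{\prime}$ whose restriction to the new branch circles is a smooth immersion because $\bigcup_j c(T_j)$ is immersed; Proposition \ref{prop:4} then upgrades $c^{\prime}$ to a map born from (resp. locally born from) an SSNS fold map.

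For the additive statements I would write $P^{\prime}=P\cup_{\sqcup_j T_j}S_C$ with $P\cap S_C\simeq\sqcup_{j=1}^{l}S^1$ and run Mayer--Vietoris over $A$. The hypothesis that each $[T_j]$ vanishes in $H_1(P;A)$ kills the component $i_{P\ast}\colon H_1(\sqcup_j T_j;A)\to H_1(P;A)$, so the gluing map reduces to $i_{S_C\ast}\colon H_1(\partial S_C;A)\to H_1(S_C;A)$. Using the long exact sequence of $(S_C,\partial S_C)$ with Lefschetz duality --- valid over $A$ since $S_C$ is orientable, and needing no orientation when the elements of $A$ have order at most $2$, which is exactly part \ref{thm:1.2} --- I would identify $\ker i_{S_C\ast}\cong A$ (the total boundary class), $\operatorname{im}i_{S_C\ast}\cong A^{l-1}$, and $\operatorname{coker}i_{S_C\ast}\cong H_1(S_{C,0};A)$, the last via Mayer--Vietoris for capping $\partial S_C$ by disks. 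This produces the short exact sequences $0\to H_2(P;A)\to H_2(P^{\prime};A)\to A\to 0$ and $0\to H_1(P;A)\oplus H_1(S_{C,0};A)\to H_1(P^{\prime};A)\to A^{l-1}\to 0$, which split because the quotients are free, giving \ref{thm:1.1.1} and \ref{thm:1.1.2}. As everything in sight is free, the universal coefficient theorem converts these into the cohomology statements of \ref{thm:1.1.3}, the $H^2$ computation using the freeness of $H_1(P^{\prime};A)$ to annihilate the $\operatorname{Ext}$ term.

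The ring and group assertions are the heart of the argument. For \ref{thm:1.1.3.1} I would use naturality under $\iota\colon P\hookrightarrow P^{\prime}$: the map $\iota^{\ast}$ is a split ring epimorphism projecting onto the $H^{\ast}(P;A)$ summand, and once I check (by pairing against the distinguished $2$-cycle $z$ generating the new $A$ in $H_2(P^{\prime};A)$, on which these classes restrict trivially) that products of $H^1(P;A)$-classes have vanishing $A$-component, the summand is a subalgebra carried isomorphically onto $H^{\ast}(P;A)$. For \ref{thm:1.1.3.2} the same cycle $z$ --- represented by $S_C$ capped by the chains in $P$ bounding the null-homologous $T_j$ --- is decisive: pairing a product $\alpha\cup\beta$ of classes in the $H^1(S_{C,0};A)$ summand against $z$ reduces, via the restriction $H^1(S_{C,0};A)\to H^1(S_C;A)$ and the relative fundamental class $[S_C,\partial S_C]\mapsto[S_{C,0}]$, to the intersection form of the closed surface $S_{C,0}$, identifying this subalgebra with $H^{\ast}(S_{C,0};A)$. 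For \ref{thm:1.1.3.3} I would use a collapse map $P^{\prime}\to\Gamma$ onto the $1$-complex of first Betti number $l-1$ recording the two pieces and the $l$ gluing circles, through which the $A^{l-1}$ classes factor; since $H^2(\Gamma;A)=0$ every cup product involving two such classes vanishes, while the mixed products with $H^1(S_{C,0};A)$ vanish by choosing representatives with disjoint supports (interior of $S_C$ versus collars of the $T_j$). Finally \ref{thm:1.1.3.4} is a graph-of-groups computation for the disconnected gluing locus: with vertex groups $\pi_1(P),\pi_1(S_C)$, edge groups $\mathbb{Z}$, and all edge maps into $\pi_1(P)$ trivial by the added hypothesis, the amalgamation relations collapse to $\partial_j=1$, killing $\pi_1(S_C)$ down to $\pi_1(S_{C,0})$ and leaving the free factor $\mathbb{Z}^{\ast(l-1)}$ from the graph, which in the relevant genus-zero situation is exactly the free product of $\pi_1(P)$ with $l-1$ copies of $\mathbb{Z}$.

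I expect the main obstacle to be \ref{thm:1.1.3.2}: because $S_C$ itself has no top cohomology, one cannot read the multiplicative structure off $S_C$ directly but must pass to the capped surface $S_{C,0}$, and the delicate point is to track both the identification of the $H^1(S_{C,0};A)$ summand inside $H^1(P^{\prime};A)$ and the precise relation between the distinguished $2$-cycle $z$ and the fundamental class of $S_{C,0}$, so that the induced cup product genuinely reproduces the (nondegenerate) intersection form of $S_{C,0}$ rather than a degenerate truncation of it.
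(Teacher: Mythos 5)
Your construction step, the additive computations, and the $\pi_1$ computation all follow essentially the paper's own route: the paper likewise runs the Mayer--Vietoris sequence for $P^{\prime}=P\cup_{\sqcup_j T_j}S_C$ (asserting, where you prove via the pair $(S_C,\partial S_C)$, that the gluing map has kernel $A$, image of rank $l-1$ of the form $(0,t)$, and cokernel $H_1(S_{C,0};A)$), passes to cohomology by universal coefficients, and for (\ref{thm:1.1.3.4}) replaces $P^{\prime}$ by $P$ and $S_{C,0}$ glued at $l$ pairs of points before applying Seifert--van Kampen --- exactly your graph-of-groups computation. Your honest output $\pi_1(P)\ast\pi_1(S_{C,0})\ast(\ast^{l-1}\mathbb{Z})$, with the $\pi_1(S_{C,0})$ factor disappearing only in the genus-zero case, is in fact what the paper's own argument yields, and is the form needed for the application in Theorem \ref{thm:2}.

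There is, however, a genuine gap in your argument for (\ref{thm:1.1.3.1}). You claim that products of classes from the $H^1(P;A)$-summand have vanishing component on the new class $z$ because "these classes restrict trivially" on $z$. They restrict trivially to $S_C$, but $z$ is not carried by $S_C$: it is $S_C$ capped off by $2$-chains $w_j$ in $P$ with $\partial w_j=T_j$, and cup products of $P$-classes can evaluate nontrivially on these chains. A concrete instance within the hypotheses: take $P=N=\Sigma_2$ a genus-two surface, $c$ the identity (born from the SSNS fold map given by the projection $\Sigma_2\times S^2\rightarrow\Sigma_2$), $T_1$ the separating curve, and $S_C$ one of the two genus-one halves $W_1\subset\Sigma_2$ bounded by $T_1$, embedded in $N$. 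Then $z$ is the class of the closed genus-two surface $Z=W^{\prime}\cup_{T_1}W_1$, where $W^{\prime}$ is the attached copy of $S_C$, and for the dual-basis lifts $\alpha,\beta\in H^1(P^{\prime};A)$ of the classes dual to the handle curves of $W_1$ one computes $\langle\alpha\cup\beta,z\rangle=\pm 1$ by restricting to $Z$ and using its intersection form, while the lifts $\gamma,\delta$ dual to the handles of the other half give $\langle\gamma\cup\delta,z\rangle=0$; so with your splitting the subalgebra generated by the $H^1(P;A)$-summand has degree-two rank two and is not isomorphic to $H^{\ast}(\Sigma_2)$. The conclusion can be rescued, but only by re-choosing the splitting --- replacing each lift $x^{\ast}$ by $x^{\ast}+\phi(x)$ with $\phi$ valued in the $H^1(S_{C,0};A)$-summand so that the intersection form of $S_{C,0}$ absorbs the skew "defect" form $(x,y)\mapsto\langle x\cup y,\sum_j w_j\rangle$ (in the example above, $\phi$ must hit both generators of $H^1(T^2;A)$); proving that such a $\phi$ always exists is the missing step, and it is not supplied by your appeal to $\iota^{\ast}$ being a split ring epimorphism, since the splitting is a priori only module-theoretic. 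To be fair, the paper's own proof is equally silent here --- it only says to test cup products against circle representatives and never specifies the identification of summands --- but the specific vanishing statement you use is the step that fails, and it occurs in (\ref{thm:1.1.3.1}) rather than in (\ref{thm:1.1.3.2}), which you flagged as the delicate point and for which your $z$-pairing argument is essentially sound.
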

\begin{proof}
$S_C$ is smoothly immersed into $N$ and the image of the boundary is ${\bigcup}_{j} c(T_j)$. This naturally induces a new piesewise smooth map $c^{\prime}$ on the new branched surface $P^{\prime}$ which is also a map locally born from an SSNS fold map. We can construct this in such a way that $B(c^{\prime})$ is regarded as the disjoint union of $B(c)$ and ${\bigcup}_{j} c(T_j)$.
 If the given map is a map born from an SSNS fold map, then we can have a map as a map born from an SSNS fold map. 
 We can know this easily.
 This completes the proof of the fact before (\ref{thm:1.1}).

We prove (\ref{thm:1.1}). 
We have a Mayer-Vietoris sequence

$$\rightarrow H_i({\sqcup}_j T_j;A) \rightarrow H_i(P;A) \oplus H_i(S_C;A) \rightarrow H_i(P^{\prime};A) \rightarrow$$

and we present a sketch of important arguments.

The quotient group $H_2(P^{\prime};A)/H_2(P;A)$ can be defined naturally by the assumption and it is isomorphic to $A$. For this, note that $H_2(S_C;A)$ and $H_2({\sqcup}_j T_j;A)$ are the trivial groups, that the value of the canonical homomorphism induced by the inclusion of the oriented circle $T_j$ at the fundamental class is the zero element of $H_1(P;A)$, and that the image of the homomorphism from $H_1({\sqcup}_j T_j;A)$ into $H_1(P;A) \oplus H_1(S_C;A)$ consists of elements of the form $(0,t)$ and of rank $l-1$ where $0$ is the zero element. Thus $H_2(P^{\prime};A)$ is isomorphic to the direct sum of $H_2(P;A)$ and $A$. 
More precisely, the summand $H_2(P;A)$ is regarded as a group generated by a set of elements in $H_2(P;A)$ mapped by the canonical homomorphism induced by the inclusion of $P$ into $P^{\prime}$, regarding $P \subset P^{\prime}$ naturally. 
For $H_1(P^{\prime};A)$, from these arguments with the structures of the homomorphisms for $i=0$, we have a desired result.

This completes the proof of (\ref{thm:1.1.1}) and (\ref{thm:1.1.2}).

We prove (\ref{thm:1.1.3}) except (\ref{thm:1.1.3.4}).

For the cohomology groups, we have desired results by fundamental arguments such as ones using universal coefficient theorem.

First we add an exposition on $H_1(P^{\prime};A)$.
The summand $H_1(P;A)$ is regarded as the image of the canonical homomorphism from $H_1(P;A)$ into $H_1(P^{\prime};A)$ induced by the inclusion of $P$ into $P^{\prime}$. We have the following facts.

\begin{itemize}
	\item The summand $H_1(S_{C,0};A)$ is a module generated by the elements which are the values of the homomorphisms induced by the canonical inclusions of (suitably oriented) circles embedded smoothly in the interior ${\rm Int}\ S_{C} \subset S_{C} \subset S_{C,0}$ of the surface $S_{C}$ in $S_{C,0}$. 
	\item The summand $A^{l-1}$ is generated by exactly $l-1$ elements which are the values of the homomorphisms induced by the canonical inclusions of (suitably oriented) suitable circles embedded as subpolyhedra in $P^{\prime}$ at the fundamental classes. Furthermore, we can choose these $l-1$ circles here in such a way that each of the $l-1$ circles and the previous circles in ${\rm Int}\ S_{C}$ do not intersect.
	
\end{itemize}

  For the cup products, consider two homology classes the cohomology duals to which we can define and we can find suitable piesewise smooth maps from oriented circles such that the values of the induced homomorphisms at the fundamental classes are these homology classes. We can investigate the cup products of the cohomology duals by observing homology classes in such a way. This completes the proof of (\ref{thm:1.1.3}) except (\ref{thm:1.1.3.4}).

We prove (\ref{thm:1.1.3.4}). Here $P^{\prime}$ is regarded as a polyhedron which is simple homotopy equivalent to a polyhedron obtained by choosing exactly $l$ points of $P$ and those of $S_{C,0}$, $l$ pairs of points each of which is a pair of a point in $P$ and one in $S_{C,0}$ and identifying the two points for each pair.

By Seifert van Kampen theorem, this completes the proof of (\ref{thm:1.1.3.4}).

These arguments complete the proof of (\ref{thm:1.1}).
We can prove (\ref{thm:1.2}) similarly.

This completes the proof.

\end{proof}
\begin{Ex}
\label{ex:1}
In the assumption of Theorem \ref{thm:1}, if the immersion of $S_C$ is an embedding and the image is diffeomorphic to a compact, connected, and orientable surface of genus $0$ and in the interior of a smoothly embedded copy of the $2$-dimensional unit disk in $N$, then we can apply Theorem \ref{thm:1}. If the additional conditions are satisfied, then we reach the additional results (\ref{thm:1.1}) and (\ref{thm:1.2}).
\end{Ex}

Hereafter, various principles such as approximations by PL or piecewise smooth maps of continuous maps are well-known. However, we do not explain precisely here. It does not affect our understanding much if we do not understand these notions and principles so much.

\begin{Thm}
\label{thm:2}
Let $P$ be a branched surface, $N$ a surface with no boundary, and $c_0:P \rightarrow N$ a map {\rm (}locally{\rm )} born from an SSNS fold map.
Let $\{D_j\}_j$ be a family of finitely many copies of the $2$-dimensional unit disk which are disjointly embedded in $P-B(c_0)$ smoothly and regarded as subpolyhedra of $P$.
We also assume that $N$ is connected and that $N-c_0(P)$ is not empty. We have the following two.
\begin{enumerate}
	\item \label{thm:2.1}
 By a suitable piesewise smooth homotopy $F_c$ from $c_0$ to a new map $c:P \rightarrow N$ and apply Theorem \ref{thm:1} by setting $T_j:=F_c(\partial D_j \times \{1\})$ and $S_C$ as a compact, connected and orientable surface of genus $0$.
Furtherore, if $P$ is connected, then we can also apply additional statements on homology groups, cohomology groups and rings, fundamental groups in Theorem \ref{thm:1}.
\item \label{thm:2.2}
We also have the following two.
\begin{enumerate}
		\item Assume that we can embed $P$ as a PL or smooth submanifold in a $3$-dimensional closed and connected manifold of Heegaard genus $g$. Then we can construct and have a PL {\rm (}piesewise smooth{\rm )} embedding of $P^{\prime}$ into one whose Heegaard genus is $g+l-1$.
	\item Assume that we can embed $P$ as a PL or smooth submanifold in a $3$-dimensional closed, connected and orientable manifold of Heegaard genus $g$. Then we can construct and have PL {\rm (}piesewise smooth{\rm )} embeddings of $P^{\prime}$ into both some $3$-dimensional closed, connected and orientable manifold and some non-orientable one whose Heegaard genera are $g+l-1$.
\end{enumerate}

\end{enumerate}
\end{Thm}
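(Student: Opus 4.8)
The plan is to read part (\ref{thm:2.1}) as a reduction to Theorem \ref{thm:1} and part (\ref{thm:2.2}) as an explicit handle-attachment construction on the ambient $3$-manifold. For (\ref{thm:2.1}) the decisive point is that we take the circles $T_j$ to be the boundaries $\partial D_j\subset P$, so that $\bigcup_j c(T_j)$ is what must bound $S_C$ in $N$; since $D_j$ is an embedded disk inside the manifold part $P-B(c_0)$, each $T_j=\partial D_j$ bounds a disk in $P$ and is therefore null-homologous in $H_1(P;A)$ for every $A$ and null-homotopic in ${\pi}_1(P)$. Hence the hypotheses on $T_j$ demanded in Theorem \ref{thm:1} (\ref{thm:1.1}) and in (\ref{thm:1.1.3.4}) hold automatically. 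First I would build the homotopy $F_c$: by Proposition \ref{prop:3} the restriction of $c_0$ to $P-B(c_0)$ is a smooth immersion, so I homotope $c_0$ through maps that are left unchanged near $B(c_0)$ and remain immersions on a neighborhood of $\sqcup_j D_j$, keeping every stage (locally) born from an SSNS fold map. Using connectivity of $N$ to bring the disk images together and the nonempty region $N-c_0(P)$ to create room, I arrange the terminal map $c$ so that the circles $c(\partial D_j)$ appear as the boundary of a single embedded sphere with $l$ holes $S_C$ (say one outer circle enclosing $l-1$ disjoint inner circles inside a planar disk region meeting $N-c_0(P)$), which is compact, connected and orientable of genus $0$. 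Theorem \ref{thm:1} then applies and yields $P'$, $c'$ and the stated algebraic data; since capping $S_C$ gives $S_{C,0}\cong S^2$ we have $H_1(S_{C,0};A)=0$, so the formula of (\ref{thm:1.1.1}) collapses to $H_1(P';A)\cong H_1(P;A)\oplus A^{l-1}$, and similarly for the remaining groups and rings.

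For (\ref{thm:2.2}) I would begin from the given embedding $P\hookrightarrow M$ into a closed connected $3$-manifold $M$ of Heegaard genus $g$ and construct the new ambient manifold by attaching $1$-handles. I realize $S_C$ as a planar piece placed off $P$ on one chosen side near $D_1$, homeomorphic to a disk with $l-1$ holes and hence carrying $l$ boundary circles; I identify one of them with $T_1=\partial D_1$ along a collar and carry each of the other $l-1$ boundary circles through a $1$-handle (a tube) attached to $M$ until it reaches and is glued to $T_j=\partial D_j$ for $j=2,\dots,l$. Each annulus running through a tube contributes $0$ to the Euler characteristic, so the assembled surface is a connected genus-$0$ surface with boundary $\sqcup_j T_j$, as required, and along each $T_j$ the two local sheets of $P$ together with the incoming sheet of $S_C$ form the $K$-type neighborhood of Definition \ref{def:2}. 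The tubes are what make the construction uniform: even when $P$ separates $M$, or when the chosen push-off sides of distinct $D_j$ lie in different complementary regions, routing over a tube always connects them. Since attaching one $1$-handle raises the Heegaard genus by exactly $1$, the $l-1$ tubes yield a closed connected $3$-manifold of Heegaard genus $g+l-1$ containing $P'$, which proves (a).

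For (b) I would control orientability through the gluing of these tubes. Identifying all $l-1$ handles by orientation-preserving maps keeps the result orientable whenever $M$ is orientable (it is $M$ connect-summed with $l-1$ copies of $S^1\times S^2$), giving an orientable target of Heegaard genus $g+l-1$. Replacing one tube by an orientation-reversing (twisted) handle, equivalently taking a connected sum with the non-orientable $S^2$-bundle over $S^1$ instead, leaves the genus count unchanged but renders the ambient manifold non-orientable, producing the required non-orientable target of Heegaard genus $g+l-1$; this twist can be absorbed into a single tube because the corresponding band of $S_C$ is an annulus whose $I$-bundle neighborhood may be taken non-orientable.

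The homology, cohomology-ring and fundamental-group bookkeeping in (\ref{thm:2.1}) is routine, being inherited verbatim from Theorem \ref{thm:1}. I expect the main obstacle to be the geometric control in the two constructions. On the one hand I must verify that $F_c$ can be realized \emph{inside} the class of maps (locally) born from an SSNS fold map while still forcing the boundary circles into the prescribed planar configuration; this is precisely where connectivity of $N$ and the nonempty complement $N-c_0(P)$ enter, and the delicate point is keeping the intermediate maps immersive near $\sqcup_j D_j$ and unchanged near $B(c_0)$. On the other hand I must check the Heegaard-genus bookkeeping for the handle attachments: that exactly $l-1$ handles always suffice no matter how the disks $D_j$ are positioned inside $M$, that the resulting closed manifold has Heegaard genus $g+l-1$ (using additivity of Heegaard genus under connected sum) rather than merely at most that, and that the twisted-handle variant realizes non-orientability without disturbing the genus count.
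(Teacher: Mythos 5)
Your proposal is correct and follows essentially the same route as the paper: for (\ref{thm:2.1}) you homotope $c_0$ so as to drag the disk images into the empty region of $N$ and nest them, making $S_C$ an embedded planar surface with one outer circle enclosing the other $l-1$ circles (the paper's chain of deformations $c_0 \rightarrow c_1 \rightarrow c_2 \rightarrow c_3 \rightarrow c$ landing in the situation of Example \ref{ex:1}), with the null-homotopy of $T_j=\partial D_j$ making the extra hypotheses of Theorem \ref{thm:1} automatic. For (\ref{thm:2.2}) your attachment of $l-1$ tubes with annular bands of $S_C$ routed through them is just a repackaging of the paper's operation of removing $l$ balls ${D^3}_j$ around the disks $D_j$ and gluing in ${S^3}_{-l}$ with $S_C$ properly embedded inside it: both amount to connected sum with $l-1$ possibly twisted $S^2$-bundles over $S^1$, and both rest on the same two points you identify, namely additivity of Heegaard genus under connected sum and the freedom to reglue by orientation-reversing maps (the paper's extension lemma for diffeomorphisms of circles in $S^2$, your twisted handle) to obtain the non-orientable ambient manifold.
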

\begin{proof}
	We prove (\ref{thm:2.1}).
	
By a suitable piesewise smooth homotopy from $c_0$ to a new map $c_1:P \rightarrow N$ born from an SSNS fold map, we can obtain a situation satisfying the following three. We deform the map around a small regular neighborhood of each $D_j$ by this homotopy. 
\begin{itemize}
\item $c_1 {\mid}_{D_j}$ is a PL embedding.
\item $c_1(D_j)$ in the family $\{c_1(D_j)\}_j$ are mutually disjoint.
\item $N-c_1(P)$ is not empty. 
\end{itemize}
$N$ is assumed to be connected. This enables us to obtain $c_1$ and take a smooth map $t_j$ which is an embedding of a closed interval and connects a point in $c_1({\rm Int}\ D_j)$ to a point $p_j$ in a connected component of $N-c_1(P)$ in such a way that the following properties are enjoyed. See also Remark \ref{rem:0} later and we omit related precise principles.
\begin{itemize}
	\item The images of the maps in $\{t_j\}_j$ are mutually disjoint for distinct $j=j_1$ and $j=j_2$. 
\item If we restrict $c_1$ to $B(c_1)$, then it is a smooth immersion. This is also included in the revised definition of a map (locally) born from an SSNS fold map, presented just after Proposition \ref{prop:4}.  
\item We can construct a smooth immersion represented as the disjoint union of the smooth immersion $c_1 {\mid}_{B(c_1)}$ just before and all maps in $\{t_j\}_j$. This smooth immersion enjoys the following two.
\begin{itemize}
\item The number of points in the preimage of a point in $N$ is at most two. The number of points in $N$ whose preimages have exactly two points is finite.
\item The intersection of the images of the one or two differentials at the points in the preimage is $1$-dimensional if the preimage consists of exactly one point and $0$-dimensional if the preimage consists of exactly two points.
\end{itemize}
\end{itemize}
For the map $c_1$, we take the connected component containing a point in $D_j$ of the preimage $P_j$ of a small regular neighborhood of the image of each curve $t_j$.
Hereafter, each homotopy used here for deformations is chosen as a P-isotopy where the polyhedron of the domain is restricted to $D_j \sqcup P_j$ (or its small regular neighborhood).
 
By a suitable piesewise smooth homotopy from $c_1$ to a suitable new map $c_2:P \rightarrow N$ born from an SSNS fold map, we can obtain a situation enjoying the following two. 
\begin{itemize}
\item $c_2 {\mid}_{D_j}$ is a piesewise smooth embedding.
\item Distinct sets in the family $\{c_2(D_j \bigcup P_j)\}_j$ are mutually disjoint and in the interior of some small smoothly embedded copy $D_{P,1}$ of the $2$-dimensional unit disk in $N-c_1(P)$.
\end{itemize}

We can see that using a suitable piesewise smooth homotopy from $c_2$ to a suitable new map $c_3:P \rightarrow N$ born from an SSNS fold map if we need, we can obtain a situation enjoying the following four.
\begin{itemize}
\item $c_3 {\mid}_{D_j}$ is a piesewise smooth embedding.
\item Distinct sets in the family $\{c_3(D_j \bigcup P_j)\}_j$ are mutually disjoint.
\item We can choose a family $\{D_{P,2,j}\}_j$ of copies of the $2$-dimensional unit disk $D^2$ embedded smoothly and disjointly in the interior ${\rm Int}\ D_{P,1}$ of $D_{P,1}$ as sufficiently small submanifolds in such a way that the image $c_3(D_j \bigcup P_j)$ before is a subspace of the interior ${\rm Int}\ D_{P,2,j}$ of $D_{P,2,j}$.
\item We can take $E_j$ as a suitable small neighborhood of $D_j \bigcup P_j$ and a 2-dimensional compact and connected subpolyhedron in $P$ in such a way that the relations $D_j \bigcup P_j \subset {\rm Int}\ E_j \subset E_j \subset {c_3}^{-1}({\rm Int}\ D_{P,1})$, $c_3(D_j \bigcup P_j) \subset {\rm Int}\ D_{P,2,j} \subset D_{P,2,j} \subset c_3(E_j)$ and ${c_3}^{-1}(D_{P,2,j}) \subset E_j$ hold. 
\end{itemize}

See also FIGURE \ref{fig:1} for the image of $c_3$ and some subspaces of $N$ discussed here.
\begin{figure}
\includegraphics[width=35mm]{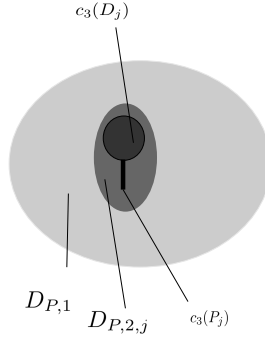}
\caption{The image of $c_3$ and several important subspaces of $N$.}
\label{fig:1}
\end{figure}

We can see that using a suitable piesewise smooth homotopy from $c_3$ to a new suitable map $c:P \rightarrow N$ born from an SSNS fold map, we can obtain a piesewise smooth homotopy $F_c$ of $c_0$ to $c$ as a result and a situation of Example \ref{ex:1} such that $T_j:=F_c(\partial D_j \times \{1\})$ and that the image of $S_C$ is $F_c(D_{j_0} \times \{1\})-{\sqcup}_{j \in J-\{j_0\}} F_c(D_j \times \{1\})$: $J$ denotes the set of all $j$, $j_0$ is a suitable element of $J$ and we can also obtain the situation such that for the suitably chosen element $j_0 \in J$, subsets in the family $\{F_c(D_j \times \{1\})\}_{j \in J-\{j_0\}}$ are mutually disjoint and also subsets of $F_c(D_{j_0} \times \{1\})$. 

We can show the last fact, obtained by applying Theorem \ref{thm:1}, immediately. This completes the proof of (\ref{thm:2.1}).

We prove (\ref{thm:2.2}).
For a $3$-dimensional closed and connected manifold $Y$ whose Heegaard genus is $g$ and which we have a PL (piesewise smooth) embedding into $Y$, we can choose $l>0$ small copies of the $3$-dimensional unit disk embedded disjointly and smoothly each of which contains $D_j$ in the following manners.
\begin{itemize}
	\item $D_j$ is embedded smoothly in one of the copies of the $3$-dimensional unit disk $D^3$, denoted by ${D^3}_j$. 
	\item The boundary $\partial D_j$ is embedded in the boundary of the copy ${D^3}_j$ of the $3$-dimensional unit disk and the interior ${\rm Int}\ D_j$ is embedded in the interior ${\rm Int}\ {D^3}_j$ of the copy ${D^3}_j$ of the $3$-dimensional unit disk. In other words, it is {\it embedded properly}.
	\item The complementary set $P-{\rm Int}\ D_j$ of $D_j$ in $P$ and the copy ${D^3}_j$ of the $3$-dimensional unit disk here do not intersect.
\end{itemize}
First, by a P-isotopy we can deform the original embedding of $P$ into $Y$ and move the copies of the $2$-dimensional unit disk in $\{D_j\}_j$ in such a way that the following conditions hold.
\begin{itemize}
	\item The interior of each $D_j$ is moved into the complementary set of the disjoint union ${\sqcup}_j {D^3}_j$ of the $l$ chosen copies of the $3$-dimensional unit disk in $Y$ by the deformation by the P-isotopy.
	\item All points in the complementary set of the interior ${\rm Int}\ D_j$ of each $D_j$ are fixed in the deformation by the P-isotopy.
\end{itemize}
After that remove the interiors of the copies of the $3$-dimensional unit disk in $Y$ or ${\rm Int}\ {\sqcup}_j {D^3}_j$ and attach a compact manifold ${S^3}_{-l}$ diffeomorphic to a manifold obtained by removing the interiors of another $l$ smoothly and disjointly embedded copies of the $3$-dimensional unit disk from a copy of the $3$-dimensional unit sphere $S^3$ instead. This yields a $3$-dimensional closed and connected manifold whose Heegaard genus is $g+l-1$, which is due to fundamental properties on Heegaard genera. If $Y$ is non-orinetable, then the resulting manifold is also non-orientable by fundamental arguments on the topologies of these manifolds. We can also see that $S_C$ here is embedded smoothly in the manifold ${S^3}_{-l}$ obeying the following rules.  
 
\begin{itemize}
	\item The boundary of $S_C$ is embedded in the boundary of ${S^3}_{-l}$.
	\item The interior of $S_C$ is embedded in the inteiror of ${S^3}_{-l}$.
	\item Distinct connected components of the boundary of $S_C$ are in distinct connected components of the boundary of ${S^3}_{-l}$.
	%
\end{itemize}

We present a well-known argument on (PL or piesewise smooth) homeomorphisms and diffeomorphisms on $2$-dimensional spheres and circles embedded there. 

Consider a copy of the $2$-dimensional unit sphere $S^2$ and a circle embedded 
here. If it is embedded as a smooth submanifold, then a diffeomorphism mapping the circle onto the circle itself extends to a diffeomorphism on the copy of the $2$-dimensional unit sphere $S^2$. If it is embedded by a piesewise smooth embedding, then a piesewise smooth homeomorphism mapping the circle onto the circle itself extends to a piesewise smooth homeomorphism on the copy of the $2$-dimensional unit sphere $S^2$. If it is embedded by a PL embedding, then a PL homeomorphism mapping the circle onto the circle itself extends to a piesewise smooth homeomorphism on the copy of the $2$-dimensional unit sphere $S^2$.
Furthermore, in all cases, we can extend the diffeomorphism, the piesewise homeomorphism, or the PL homeomorphism to both orientation preserving and reversing ones.

This argument with some fundamental arguments on the topologies of the manifolds we consider completes the proof of (\ref{thm:2.2}).
\end{proof}



The following notion is not defined in published articles as the author knows. Related to this, Makoto Ozawa informally introduced the author that we can define {\it PL immersions} for multibranched surfaces. We define {\it piesewise smooth immersions}, which are morphisms in the piesewise smooth category.
\begin{Def}
\label{def:8}
For a piesewise smooth map $c_0$ on a branched surface $P$ into a $3$-dimensional manifold $X$ with no boundary, it is said to be a {\it piesewise smooth immersion} if
there exists a homotopy between $c_0$ and a map $c:P \rightarrow X$, satisfying the following four conditions.
\begin{enumerate}
\item \label{def:8.1} If we restrict the homotopy to $(P-{\sqcup}_j C_j) \times [0,1]=(P-B(c)) \times [0,1]$ in Definition \ref{def:2}, then it is a P-isotopy and the restriction of this to $(P-B(c)) \times \{t\}$ for each $t \in [0,1]$ is a smooth immersion.
\item \label{def:8.2}
If we restrict the homotopy to ${\sqcup}_j C_j \times [0,1]= B(c)\times [0,1]$ in Definition \ref{def:2}, then it is a P-isotopy and the restriction of this to  $B(c) \times \{t\}$ for each $t \in [0,1]$ is a smooth embedding.  
\item \label{def:8.3}
For the immersion $c_{{\rm i},2}$ obtained in {\rm (}\ref{def:8.1}{\rm )} as the composition of the map for the natural identification mapping $x \in P-B(c)$ to $(x,1)$ with the restriction of the P-isotopy to $(P-B(c)) \times \{1\}$ and any point $p$ in the image, the following two are satisfied.
\begin{enumerate}
\item \label{def:8.3.1}
 The preimage consists of at most two points. 
\item \label{def:8.3.2}
The intersection of the images of the one or two differentials at the points in the preimage is $2$-dimensional if the preimage consists of exactly one point and $1$-dimensional if the preimage consists of exactly two points.
\end{enumerate}
\item \label{def:8.4}
 For the immersion $c_{{\rm i},2}$ and the embedding $c_{{\rm e},1}:B(c) \rightarrow X$ obtained in {\rm (}\ref{def:8.2}{\rm )} in a similar way, the following two are satisfied.
\begin{enumerate}
\item \label{def:8.4.1}
 The number of points in $X$ which is in the images of the both maps is finite. Furthermore, the preimage consists of exactly two points for each of such points for the map represented as the disjoint union $c_{{\rm i,}2} \sqcup c_{{\rm e},1}$. 
\item \label{def:8.4.2}
For each point in the images of the both maps, the intersection of the images of the differentials at the points in the preimage is $0$-dimensional.
\end{enumerate}
\end{enumerate}
\end{Def}
\begin{Rem}
\label{rem:0}
The conditions (\ref{def:8.3}) and (\ref{def:8.4}) of Definition \ref{def:8} respect so-called "transwersality (transversalities)" of submanifolds (in the PL, piecewise smooth, or smooth category). Similar conditions appear in the proof of Theorem \ref{thm:2} for example. Properties of this type hold in general where we discuss our problems: they are so-called {\it generic} ones. For example, \cite{golubitskyguillemin} explains about fundamental notions and properties on transversality (transversalities) of submanifolds and smooth maps and {\it generic} smooth maps.
\end{Rem}

By respecting our arguments, we can also obtain the following theorem.

\begin{Prop}
\label{prop:5}
Let $P$ be a normal branched surface, $N$ a surface with no boundary, and $c:P \rightarrow N$ a map born from an SSNS fold map. Then $c$ is represented as the composition of a piesewise smooth immersion into $N \times \mathbb{R}$ with the canonical projection to $N$.
\end{Prop}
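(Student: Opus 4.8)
The plan is to exhibit the required immersion in the form $i_{c,N}=(c,h):P \rightarrow N \times \mathbb{R}$, where $h:P \rightarrow \mathbb{R}$ is a suitable piesewise smooth ``height'' function and the canonical projection $\pi:N \times \mathbb{R} \rightarrow N$ satisfies $\pi \circ i_{c,N}=c$ tautologically. Since $N \times \mathbb{R}$ is a $3$-dimensional manifold with no boundary, $i_{c,N}$ is a legitimate candidate for a piesewise smooth immersion in the sense of Definition \ref{def:8}, and the whole content lies in choosing $h$ so that the four conditions there hold. I first observe that on the manifold part $P-B(c)$ the map $c$ is already a smooth immersion into the surface $N$ by Proposition \ref{prop:3}, so at every manifold point the differential $(dc,dh)$ has rank $2$ regardless of $h$; hence condition (\ref{def:8.1}) is automatic once $h$ is piesewise smooth. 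Likewise $c$ restricted to $B(c)$ is a smooth immersion of a disjoint union of circles (it corresponds to the immersed singular value set of the fold map), so $i_{c,N}|_{B(c)}$ is such an immersion, which a small adjustment of $h$ at its finitely many self-crossings renders a smooth embedding as demanded by condition (\ref{def:8.2}).

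Next I would build $h$ from the local normal forms of Proposition \ref{prop:3} together with the observation preceding Definition \ref{def:7}. Near a circle $C_j$ of the first type, where $N(C_j)$ is a trivial $[0,1]$-bundle, $c$ is locally the product of two homeomorphisms and is in particular a local embedding, so no separation is needed and $h$ may be taken to vanish. The essential case is a circle $C_j$ of the $K$-type: there the fibre map $K \rightarrow [-\tfrac{1}{2},1]$ of Proposition \ref{prop:3} is the first-coordinate projection of the standard inclusion $K \subset \mathbb{R}^2$, and the two segments of $K$ at angles $\tfrac{2}{3}\pi$ and $\tfrac{4}{3}\pi$ are identified by this projection. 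I would set $h$, on a collar of $C_j$, equal to the second (that is, $y$-) coordinate of the inclusion $K \subset \mathbb{R}^2$, damped to zero by a bump function as one leaves the collar. This replaces the projection $K \rightarrow [-\tfrac{1}{2},1]$ by the full inclusion $K \hookrightarrow \mathbb{R}^2$ in the transverse-and-height directions, so that the two previously identified segments are separated to heights $y>0$ and $y<0$ and the three sheets meet only along the branch circle at $y=0$. Extending $h$ arbitrarily and piesewise smoothly over the remainder of $P$ yields the global map $i_{c,N}=(c,h)$.

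It remains to record why this $h$ is globally well defined and to arrange the transversality conditions (\ref{def:8.3}) and (\ref{def:8.4}). The global well-definedness is precisely where the hypothesis that $P$ is \emph{normal} (Definition \ref{def:6}) is used: for a non-normal $K$-type circle the structure group is the order-two group interchanging the two back segments, i.e.\ acting by $y \mapsto -y$, so the fibrewise $y$-coordinate has non-trivial monodromy around $C_j$ and admits no single-valued lift; normality trivialises the $K$-bundle and turns this coordinate into a genuine function. This monodromy obstruction is the main difficulty of the statement, and it is exactly what forces the normality assumption (compare Main Theorem \ref{mthm:3.1}). With the sheets so separated, over the formerly double-sheeted region the preimage of a point becomes a single point, so only finitely many transverse double points of $i_{c,N}|_{P-B(c)}$ and finitely many transverse meetings of the immersed sheets with the embedded branch circle survive; these are generic and achievable after an arbitrarily small piesewise smooth homotopy, which is the homotopy permitted in Definition \ref{def:8}, by the genericity principle of Remark \ref{rem:0}. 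Verifying the dimension counts in (\ref{def:8.3.2}) and (\ref{def:8.4.2}) is then the routine observation that two $2$-planes, and a $2$-plane and a line, in a $3$-dimensional space meet generically in dimensions $1$ and $0$.
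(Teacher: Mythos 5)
Your proposal is correct and is essentially the paper's own argument: both lift $c$ near the branch circles by replacing the fibrewise projection of $K$ onto a closed interval with the inclusion $K \hookrightarrow {\mathbb{R}}^2$ (the observation recorded just after Proposition \ref{prop:3}), use normality to trivialise the $K$-bundles so the transverse height is single-valued, extend over the complementary surface pieces where $c$ is already an immersion, and invoke the transversality/genericity of Remark \ref{rem:0} to secure conditions (\ref{def:8.3}) and (\ref{def:8.4}). Your only departures are presentational --- one global height function $h$ assembled with bump functions instead of the paper's gluing of local lifts over the $N(C_j)$ and the complementary components, plus an explicit identification of the $y \mapsto -y$ monodromy obstruction, which the paper leaves implicit in the remark that the bundle $N(C_j)$ is trivial since $P$ is normal.
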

\begin{proof}
 Let $C_j$ denote each connected component of $B(c)$ and let $N(C_j)$ denote its suitable small regular neighborhood. 
 Note that the bundle $N(C_j)$ over $C_j$ is a trivial PL bundle since $P$ is normal.
 We can construct a PL (piesewise smooth) embedding $e_{B(c)}:{\sqcup}_j N(C_j) \rightarrow {\sqcup}_j N(C_j) \times \mathbb{R} \subset N \times \mathbb{R}$ such that the composition with the canonical projection to $N$ is $c {\mid}_{{\sqcup}_j N(C_j)}$. More precisely, we can argue in this way due to comments just after Proposition \ref{prop:3} for example.
For each connected component $P_R$ of $N-{\rm Int}\ {\sqcup}_j N(C_j)$, we have a similar piesewise smooth immersion $e_R:P_R \rightarrow P_R \times \mathbb{R}$. More precisely, we can also argue in this way due to comments just after Proposition \ref{prop:3} for example. By gluing them together in a suitable way, we have a desired piesewise smooth immersion. Note also that via fundamental arguments on transversality, discussed in Remark \ref{rem:0} shortly, we have a piesewise smooth immersion as in Definition \ref{def:8}.
\end{proof}
\begin{Thm}
\label{thm:3}
Let $P$ be a normal branched surface, $N$ an orientable surface with no boundary, and $c_0:P \rightarrow N$ a map born from an SSNS fold map. Then we have a piesewise smooth immersion from $P$ into $Y:={\mathbb{R}}^3,S^3$.
\begin{enumerate}
	\item \label{thm:3.1}
	 Furthermore, the piesewise smooth immersion is represented as $e_{N,Y} \circ i_{c_0,N}$ where the two maps are as follows.
\begin{enumerate}
\item $i_{c_0,N}:P \rightarrow N \times \mathbb{R}$ is a piesewise smooth immersion such that the composition with the canonical projection is $c_0$.
\item $e_{N,Y}$ is a piesewise smooth embedding into $Y$.
\end{enumerate}
\item \label{thm:3.2}
In {\rm (}\ref{thm:3.1}{\rm )}, suppose that the following conditions hold in addition.
\begin{itemize}
	\item $N$ is connected.
		\item $N-c_0(P)$ is not empty.
	\item $i_{c_0,N}:P \rightarrow N \times \mathbb{R}$ is a piesewise smooth embedding.
	\item The complementary set of the image $i_{c_0,N}(P)$ in $N \times \mathbb{R}$ is connected.
	\end{itemize}
	 Then by applying Theorem \ref{thm:2} with slightly revised arguments in the proof, we have a new piesewise smooth map $c:P \rightarrow N$ enjoying the following properties. 
	 \begin{enumerate}
	 	\item We have a piesewise smooth embedding $i_{c,N}:P \rightarrow N \times \mathbb{R}$ such that the composition with the canonical projection is $c$.
	 	\item We have a piesewise smooth embedding $e_{N,Y} \circ i_{c,N}:P \rightarrow Y$.
	 \end{enumerate} 
\end{enumerate}
\end{Thm}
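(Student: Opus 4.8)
The plan is to treat the two parts in order, building part (\ref{thm:3.2}) on top of part (\ref{thm:3.1}) and on the surgery furnished by Theorem \ref{thm:2}. For part (\ref{thm:3.1}), I would first invoke Proposition \ref{prop:5}: since $P$ is a normal branched surface and $c_0$ is born from an SSNS fold map, $c_0$ factors as a piesewise smooth immersion $i_{c_0,N}:P \rightarrow N \times \mathbb{R}$ (satisfying the conditions of Definition \ref{def:8} in the $3$-manifold $N \times \mathbb{R}$) composed with the canonical projection to $N$. This supplies the first of the two required maps directly. To produce $e_{N,Y}$, I would use that $N$ is an orientable surface with no boundary, hence embeds in $\mathbb{R}^3 \subset S^3$ as a two-sided submanifold; its normal bundle is then trivial, so a tubular neighborhood yields a piesewise smooth embedding $e_{N,Y}:N \times \mathbb{R} \rightarrow Y$ for $Y = \mathbb{R}^3, S^3$ onto an open neighborhood of the embedded copy of $N$. (If $N$ is noncompact one may instead embed a regular neighborhood of the compact set $c_0(P)$, which suffices because $P$ is compact.) Finally, since $e_{N,Y}$ is injective and its differential is an isomorphism onto its image, the double points of $e_{N,Y} \circ i_{c_0,N}$ correspond exactly to those of $i_{c_0,N}$ and the relevant tangent planes are carried isomorphically; hence the composition again satisfies Definition \ref{def:8} and is the desired piesewise smooth immersion of $P$ into $Y$.

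For part (\ref{thm:3.2}), the abuse of notation should be read as follows: after carrying out the construction of Theorem \ref{thm:2} (\ref{thm:2.1}), the resulting new branched surface and its map born from an SSNS fold map are again written $(P,c)$. Under the extra hypotheses I would run the homotopy $F_c$ of Theorem \ref{thm:2} (\ref{thm:2.1}) while lifting through $i_{c_0,N}$: because $i_{c_0,N}$ is now assumed to be an embedding, each step deforming $c_0$ near the disks $D_j$ can be realized by a P-isotopy in $N \times \mathbb{R}$ that keeps the lift an embedding, using the extra $\mathbb{R}$-coordinate to keep sheets apart. This produces an embedding $i_{c,N}$ of $P$ covering the deformed map $c$, exactly as in part (\ref{thm:3.1}).

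The essential new step, and the place where the two remaining hypotheses enter, is to realize the attached genus-$0$ surface $S_C$ as an \emph{embedded} surface in $N \times \mathbb{R}$ meeting $i_{c,N}(P)$ precisely along the circles $T_j = F_c(\partial D_j \times \{1\})$. Recall that $S_C$ is a sphere with $l$ holes, hence a disk with $l-1$ orientably attached bands, glued to $P$ along the $l$ circles $T_j$. I would push the lift of the large disk $D_{j_0}$ off $i_{c,N}(P)$ to one side in the $\mathbb{R}$-direction to obtain an embedded disk in the complement $U := (N \times \mathbb{R}) \setminus i_{c,N}(P)$ with boundary parallel to $T_{j_0}$, and then split off $l-1$ further boundary circles, parallel to the remaining $T_j$, by $l-1$ disjointly embedded bands routed through $U$. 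Since $U$ is connected (and $N$ is connected with $N - c_0(P) \neq \emptyset$, giving room in the $N$-direction as well), these arcs and bands can be chosen embedded and mutually disjoint, yielding an embedded copy of $S_C$ with interior in $U$ and boundary circles glued to the $T_j$. The union of $i_{c,N}(P)$ with this embedded $S_C$ is then the image of the embedding $i_{c,N}$ of the new branched surface, its projection to $N$ is $c$, and composing with $e_{N,Y}$ from part (\ref{thm:3.1}) gives the embedding $e_{N,Y} \circ i_{c,N}:P \rightarrow Y$.

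The main obstacle I anticipate is precisely this embedded realization of $S_C$: one must route the bands joining the $l$ boundary circles through the complement without introducing new intersections with $i_{c,N}(P)$ or among themselves, and connectedness of $U$ is exactly what guarantees that the connecting arcs exist and can be made disjoint (transversality being handled generically, as in Remark \ref{rem:0}). By contrast, the verification that the resulting object is again a branched surface carrying a map born from an SSNS fold map is already supplied by Theorem \ref{thm:2}, so the only genuinely new content is the compatibility of that surgery with the ambient embedding into $N \times \mathbb{R}$ and hence into $Y$.
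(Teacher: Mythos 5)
Your proof of part (\ref{thm:3.1}) coincides with the paper's: Proposition \ref{prop:5} supplies $i_{c_0,N}$, and the paper likewise realizes $N \times \mathbb{R}$ inside $Y$ as a tubular neighborhood of an embedded copy of the orientable surface $N$; your remarks on two-sidedness and on noncompact $N$ are harmless refinements. For part (\ref{thm:3.2}), the portion of your argument that matches the paper is your second paragraph: the paper's entire proof consists of lifting the homotopy $F_c$ from the proof of Theorem \ref{thm:2} to a P-isotopy in $N \times \mathbb{R}$ starting at the embedding $i_{c_0,N}$ and ending at a new embedding $i_{c,N}$, with the family $\{t_j\}_j$ rechosen so that each $t_j$ is an embedded arc and distinct arcs may now meet, transversally and only in their interiors, precisely because their lifts into the connected complement $(N\times\mathbb{R})-i_{c_0,N}(P)$ are disjoint. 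Note that this lifting step is where the paper spends the hypothesis that the complement is connected (the lifted arcs along which the disks $D_j$ are dragged must live there), whereas you invoke that hypothesis only later, for a different purpose.

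The remainder of your proposal rests on a misreading of the statement. In Theorem \ref{thm:3} (\ref{thm:3.2}), $P$ is still the original branched surface and $c$ is the deformed map, the endpoint of $F_c$, to which Theorem \ref{thm:1} can subsequently be applied; the ``abuse of notation'' in the paper refers to reusing the notation of the proof of Theorem \ref{thm:2} (the maps $F_c$, $t_j$, the disks $D_j$), not to renaming the surgered branched surface $P^{\prime}$ as $P$. The theorem does not assert an embedding of $P^{\prime}$ into $N \times \mathbb{R}$ or $Y$, and the paper's proof stops once $i_{c,N}$ is produced. Your third and fourth paragraphs therefore attempt a strictly stronger claim, and as written they contain a genuine gap: if the pushed-off disk and the $l-1$ bands are merely routed through $U$ using connectedness of $U$, there is no reason the projection to $N$ of the resulting embedded copy of $S_C$ is an immersion, let alone the prescribed genus-zero surface of Theorem \ref{thm:2} whose boundary image is ${\bigcup}_j c(T_j)$; arbitrarily routed bands acquire folds under the projection, so the extended map on $P^{\prime}$ would be merely piecewise smooth and would fail to be (locally) born from an SSNS fold map, i.e.\ the composition with the canonical projection would not be the map $c^{\prime}$ produced by Theorem \ref{thm:1}. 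To prove your stronger claim one would have to realize $S_C$ essentially as a graph over its planar image region in $N$, dodging the finitely many sheets of $i_{c,N}(P)$ lying over that region, with prescribed boundary heights along the lifts of the $T_j$ --- a constraint that connectedness of $U$ alone does not deliver. Fortunately none of this is needed for the theorem as stated, which your second paragraph already establishes along the same lines as the paper.
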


\begin{proof}
We prove (\ref{thm:3.1}).

$N \times \mathbb{R}$ can be embedded as a smooth submanifold in $Y$. Moreover, this is seen as a tubular neighborhood of a smooth submanifold diffeomorphic to $N$.
By enlarging the manifold of the target, which is diffeomorphic to $N \times \mathbb{R}$, in Proposition \ref{prop:5}, to $Y$, we have a desired result. 

We prove (\ref{thm:3.2}). Our proof of this applies transversality, discussed in Remark \ref{rem:0}, implicitly. We also abuse the notation in (the proof of) Theorem \ref{thm:2} for example.

$N$ and $N \times \mathbb{R}$ are assumed to be connected. $(N \times \mathbb{R})-i_{c_0,N}(P)$ is assumed to be connected where $i_{c_0,N}(P)$ is the image of the piesewise smooth embedding $i_{c_0,N}$. $c_0$ is assumed to be not surjective.
 Due to these conditions, we can obtain a piesewise smooth homotopy $F_C$ in the proof of Theorem \ref{thm:2} represented as the composition of a P-isotopy from $i_{c_0,N}$ to another piesewise smooth embedding $i_{c,N}$ with the canonical projection from $N \times \mathbb{R} \times [0,1]$ to $N \times [0,1]$, where we choose the family $\{t_j\}_j$ of the maps in a slightly revised way. We can choose the family enjoying the following properties.
\begin{itemize}
	\item Each map $t_j$ is a smooth embedding of a closed interval.
	\item For distinct maps in the family, the images may intersect only in the interiors (satisfying the "transversality").
\end{itemize} 
This gives a desired piesewise smooth map $c:P \rightarrow N$ represented as the composition of some piesewise smooth embedding $i_{c,N}$ with the canonical projection.
\end{proof}

We present our main motivation of the studies. This is also presented in the first section a little and will be presented in the last section a little. This new study is mainly motivated by studies of the author on global topologies of Reeb spaces of explicit fold maps and also motivated by geometric and combinatorial studies on smooth maps on closed surfaces into the plane or surfaces with no boundaries, mainly ones in \cite{yamamoto_t} and related articles by Takahiro Yamamoto, which are also references in \cite{yamamoto_t}. \cite{pignoni} is also a closely related work. Yamamoto introduced and obtained a family of fundamental operations of changing surfaces and maps on surfaces into surfaces. He has also studied patterns of tuples of numbers consisting of genera of surfaces (of domains), mapping degrees of maps between closed, connected and oriented surfaces, numbers of connected components of singular sets, consisting of circles, numbers of singular points defined as {\it cusp} points, and numbers of so-called {\it double points} or {\it nodes} for the images of singular sets, for example. In other words, he has been challenging a kind of studies on the so-called geography of {\it generic} smooth maps on closed surfaces into the plane or surfaces with no boundaries. This has motivated us to study the geography of branched surfaces and that of maps born from SSNS fold maps. Branched surfaces are in general not being manifolds and so-called {\it non-manifold} points seem to play similar roles as singular points play in cases for smooth maps on comapct surfaces into surfaces with no boundaries.

General algebraic topological or differential topological theory on smooth maps on closed manifolds whose dimensions are greater than or equal to $2$ into the plane are also presented in \cite{chess} and \cite{quine}, for example. \cite{yonebayashi} presents some related results such as new proofs of formula in \cite{chess} for numbers of double points for the images of the restrictions to the singular sets for simple fold maps on $3$-dimensional closed and orientable manifolds into surfaces.

Last we present additional remarks. 

\begin{Rem}
\label{rem:1}
Some of arguments and results in the present paper can be generalized for {\it branched manifolds}, defined in \cite{kitazawa11}, based on existing studies on (generic) smooth maps in the generalized cases. Generalizations are in some cases routine works and may be difficult in some cases. \cite{kitazawa11} concentrates on global topologies of Reeb spaces in cases where the dimensions of the manifolds are general.
\end{Rem}
\begin{Rem}\label{rem:2}
On global topologies of branched surfaces and maps born from SSNS fold maps, \cite{kitazawa11} explains about existing related studies shortly. As presented there, \cite{kobayashisaeki} and \cite{naoe} present or implicitly imply that branched surfaces resembling disks in homology groups (and fundamental groups) are regarded as Reeb spaces of simple fold maps.

The author has obtained (families of) Reeb spaces of explicit fold maps mainly for fold maps such that the restrictions to the singular sets are embeddings.
The present paper presents Reeb spaces of simple fold maps whose restrictions to the singular sets may not be embeddings in the class of $2$-dimensional branched surfaces. Such a class of Reeb spaces of fold maps has not been concentrated on so much.  For example, \cite{saeki3} shows that if a $3$-dimensional closed and orientable manifold has a simple fold map into a surface, then it has a simple fold map into the plane such that the restriction to the singular set is an embedding. This seems to imply explicitly that in considerable cases "simple fold maps" can be replaced by "simple fold maps such that the restrictions to the singular sets are embeddings".
\end{Rem}
\begin{Rem}
In Definition \ref{def:2}, in various situations, we also consider a kind of {\it orientations} for {\it branched points} or {\it orientations} around points corresponding to $0 \in K$ of the fibers. In the present paper, we do not consider such a notion.
We can consider Main Problem \ref{mprob:1} respecting this notion and this makes easier to find Reeb spaces which can not be Reeb spaces of simple fold maps. See \cite{saeki4} and \cite{yonebayashi} for example.
\end{Rem}
\section{Acknowledgment and data availability.}
The author is a member of and supported by JSPS KAKENHI Grant Number JP17H06128 "Innovative research of geometric topology and singularities of differentiable mappings" (Principal Investigator: Osamu Saeki). 

The author would like to thank Takahiro Yamamoto for discussions on systematic construction of explicit smooth maps via fundamental operations respecting structures of manifolds and maps, closely related to \cite{yamamoto_t} for example. This has lead the author to the present study. These discussions have motivated the author to systematically construct explicit smooth maps whose codimensions are negative and obtain their Reeb spaces and algebraic topological or differential topological information of them in \cite{kitazawa2, kitazawa3, kitazawa4, kitazawa6, kitazawa8, kitazawa9, kitazawa11} for example as in the introduction. These studies by the author have also played important roles in the birth of the present study. The author would also like to thank him for recent discussions on the present paper: the author could know that he had considered extending his theory to multibranched surfaces, systematically studied by Makoto Ozawa in \cite{ozawa} with \cite{matsuzakiozawa, munozozawa}. We can regard our study as a related pioneering study contributing to Yamamoto's theory on global (differential) topological properties on differentiable maps between surfaces.

The present study is also related to a joint research project at Institute of Mathematics for Industry, Kyushu University (20200027), ''Geometric and constructive studies of higher dimensional manifolds and applications to higher dimensional data'', principal investigator of which is the author and the author would like to thank people supporting the research project.
 
This is a project to apply geometric theory on higher dimensional differentiable manifolds developed through studies of the author to analysis of higher dimensional data analysis and visualizations for example. 

Various projects applying mathematics such as the singularity theory of differentiable maps and differential topology to machine-learnings and related problems such as multi-optimization problems, evolutionary computations and visualizations, have been established from Institute of Mathematics for Industry, Kyushu University, for which the author works, and around the world.
\cite{hamadahayanoichikikabatateramoto, ichikihamada, sakuraisaekicarrwuyamamotoduketakahashi} present related studies. The author has been also interested in this field. 
Discussions with Naoki Hamada, an expert of multi-optimization problems together with related topics, on applications of Reeb spaces to understanding topological structures
 of multi-functions in the real world and visualizations, have been very interesting. These discussions have also motivated the author to investigate global topologies of Reeb spaces of smooth maps of good classes further and lead the author to the present study.

The author would like to thank Makoto Ozawa for very interesting talks on embeddings of multibranched surfaces
into $3$-dimensional manifolds and interesting comments on questions by the author. 
He kindly introduced \cite{matsuzakiozawa} and Theorem 3.5 there as an important inequality on the genera of $3$-dimensional closed, connected and orientable manifolds into which we can embed multibranched surfaces. He also introduced the fact that we can consider the notion of an {\it immersion} of a multibranched surface and that this inequality can be extended for these immersions.
This has motivated the author to relate the present study to the topological theory multibranched surfaces immersed or embedded in $3$-dimensional manifolds as in Theorem \ref{thm:2} (\ref{thm:2.2}) and Theorem \ref{thm:3} and give answers to Main Problem \ref{mprob:3}.  

The author declares that data supporting the present study are all in the present paper. 

\end{document}